\numberwithin{equation}{section}  
\newtheorem{theorem}{Theorem}[section]
\newtheorem{lemma}{Lemma}[section]
\newtheorem{proposition}{Proposition}[section]
\newtheorem{corollary}{Corollary}[section]
\newenvironment{proof}[1][Proof]{\begin{trivlist}
\item[\hskip \labelsep {\bfseries #1}]}{\end{trivlist}}
\begin{document}
\title {Generalized Hilbert Operator Acting on Bloch Type Spaces\footnote{ The research was supported by the National Natural Science Foundation of China (Grant No. 11671357, 11801508)}}
\author{ Shanli Ye\footnote{Corresponding author.~ E-mail address: slye@zust.edu.cn} \quad\quad Zhihui Zhou\footnote{E-mail address: zzhh144@163.com}  \\
\small \it School of Science, Zhejiang University of Science and Technology, Hangzhou 310023, China}

 \date{}
\maketitle
\begin{abstract}  Let $\mu$ be a positive Borel measure on the interval [0,1). For $\alpha>0$, the Hankel matrix $\mathcal{H}_{\mu,\alpha}=(\mu_{n,k,\alpha})_{n,k\geq 0}$ with entries $\mu_{n,k,\alpha}=\int_{[0,1)}\frac{\Gamma(n+\alpha)}{n!\Gamma(\alpha)}t^{n+k}d\mu(t)$ formally induces the operator
	$$\mathcal{H}_{\mu,\alpha}(f)(z)=\sum_{n=0}^{\infty}\left(\sum_{k=0}^{\infty} \mu_{n, k,\alpha} a_{k}\right)z^{n} $$ on the space of all analytic functions  $f(z)=\sum_{k=0}^{\infty}a_{k}z^{k}$ in the unit disc $\mathbb{D}$.
In this paper,
we characterize the measures $\mu$ for which $\mathcal{H}_{\mu,\alpha}$ ($\alpha\geq 2$) is a bounded (resp., compact) operator from the Bloch type space $\mathscr{B}_{\beta}$ ($0<\beta<\infty$) into $\mathscr{B}_{\alpha-1}$. We also give a necessary condition for which $\mathcal{H}_{\mu,\alpha}$ is a bounded operator by acting on Bloch type spaces for general cases.\\
{\small\bf Keywords}\quad {Hilbert operator, Bloch space, Bergman space
 \\
    {\small\bf 2020 MR Subject Classification }\quad 47B35, 30H30, 30H20\\}
\end{abstract}
\maketitle

\section{Introduction}
\hspace*{1.25em} Let $\mathbb{D}$ denote the open unit disk of the complex plane and $H(\mathbb{D})$ denote the set of all analytic functions in $\mathbb{D}$.

   For $\alpha>0$, the $\alpha$-Bloch space (also called Bloch type space), denoted by $\mathscr{B}_{\alpha}$, consists of those functions $f\in H(\mathbb{D})$ for which
$$\|f\|_{\mathscr{B}_{\alpha}}= \sup_{z\in\mathbb{D}}(1-|z|^2)^{\alpha}|f'(z)|<\infty.$$ The classical Bloch space $\mathscr{B}$ is just $\mathscr{B}_{1}$.
It is easy to check that $\mathscr{B}_{\alpha}$ equipped with the norm $\|f\|=|f(0)|+\|f\|_{\mathscr{B}_{\alpha}}$ is a Banach space.  We refer to \cite{Zhu_1993_Bloch} for more information of $\alpha$-Bloch space.

If $0<p<\infty$, the Bergman space $A^p$ consists of those functions $f\in H(\mathbb{D})$ satisfying
$$\|f\|_{A^p}^{p}=\int_{\mathbb{D}}|f(z)|^{p}dA(z)<\infty,$$
where $dA$ denotes the normalized Lebesgue area measure on $\mathbb{D}$. See \cite{duren_bergman_2004} for the theory of Bergman spaces.

Let us recall the definition of Carleson-type measures. If $s>0$ and $\mu$ is a positive Borel measure on $\mathbb{D}$. Then $\mu$ will be called an $s$-Carleson measure if there exists a positive constant $C$ such that
$$\mu(S(I))\leq C|I|^s$$
for every set $S(I)$ of the form
$$S(I)=\{z=re^{it}:~e^{it}\in I;~1-\frac{|I|}{2\pi}\leq r\leq 1\},$$
where $I$ is an interval of $\partial \mathbb{D}$ and $|I|$ denotes the length of $I$. If $\mu$ satisfies $\lim_{|I|\to 0}\frac{\mu(S(I))}{|I|^s}=0$, we say that $\mu$ is a vanishing s-Carleson measure. It is well known \cite{hastings_carleson_1975} that for $0<p\leq q<\infty$, $\mu$ is a $\frac{2q}{p}$-Carleson measure if and only if
there exists a positive constant $C$ such that the inequality
\begin{align}\label{eq1}	
	\left\{\int_{\mathbb{D}}|f(z)|^qd\mu(z)\right\}^{1/q}\leq C\|f\|_{A^p}
\end{align}
holds for all $f\in A^p.$

Let $\mu$ be a positive Borel measure on $\mathbb{D}$. For $0\leq \alpha<\infty$ and $0<s<\infty$, we say that $\mu$ is an $\alpha$-logarithmic $s$-Carleson measure, if there exists a positive constant $C$ such that
$$\frac{\mu(S(I))(\log\frac{2\pi}{|I|})^{\alpha}}{|I|^s}\leq C,\quad \mbox{for~all~interval}~I\subset\partial\mathbb{D}.$$
If $\mu(S(I))(\log\frac{2\pi}{|I|})^{\alpha}=o(|I|^s)$, as $|I|\to 0$, we say that $\mu$ is a vanishing $\alpha$-logarithmic $s$-Carleson measure (cf. \cite{zhao_Vanishing_logarithmic,zhao_logarithmic}).

A positive Borel measure $\mu$ on $[0,1)$ can be seen as a Borel measure on $\mathbb{D}$ by identifying it with the measure $\widetilde{\mu}$ defined as
$$\widetilde{\mu}(A)=\mu(A\cap[0,1)),$$ $\mbox{for~any~Borel~subset}~A~\mbox{of}~\mathbb{D}.$
In this way, we say that a positive Borel measure $\mu$ on [0,1) is an $s$-Carleson measure if and only if there exists a positive constant $C$ such that
$$\mu([t,1))\leq C(1-t)^s,\quad 0\leq t<1.$$
Also, we have similar statements for the other cases.

If $\mu$ is a positive Borel measure on $[0,1)$, for $\alpha>0$, we define $\mathcal{H}_{\mu,\alpha}=(\mu_{n,k,\alpha})_{n,k\geq 0}$ to be the Hankel matrix with entries $\mu_{n,k,\alpha}=\displaystyle\int_{[0,1)}\frac{\Gamma(n+\alpha)}{n!\Gamma(\alpha)}t^{n+k}d\mu(t)$. The matrix $\mathcal{H}_{\mu,\alpha}$ can be viewed as an operator on $H(\mathbb{D})$ by its action on the Taylor coefficients: $$a_n\to \sum_{k=0}^{\infty}\mu_{n,k,\alpha}a_k,\quad n=0,1,2,\cdots.$$ To be precise, if $f(z)=\sum_{k=0}^{\infty}a_kz^k\in H(\mathbb{D})$, we define the Hankel operator $\mathcal{H}_{\mu,\alpha}$ as
\begin{equation}\label{eq2}
	\mathcal{H}_{\mu,\alpha}(f)(z)=\sum_{n=0}^{\infty}\big(\sum_{k=0}^{\infty}\mu_{n,k,\alpha}a_k\big)z^n,
\end{equation}
whenever the right hand side makes sense and defines an analytic function in $\mathbb{D}$. The operators $\mathcal{H}_{\mu,1}$ have been extensively studied in \cite{Hankel_matrices_Dirichlet_2014,chatzifountas_generalized_2014,galanopoulos_hankel_2010,girela_generalized_2018,Li-2021}.
In this case, if we let $\mu$ be the Lebesgue measure on $[0,1)$, we can find that $\mathcal{H}_{\mu,1}$ is just the classical Hilbert matrix $\mathcal{H}=\big((n+k+1)^{-1}\big)$, which induces the classical Hilbert operator (see \cite{aleman_eigenfunctions_2012,diamantopoulos_composition_2000,galanopoulos_generalized_2014} for more details). For the case $\alpha=2$, we have studied the operator in \cite{Ye-Zhou-Bergman,Ye-Zhou-Bloch} and we call $\mathcal{H}_{\mu,2}$ the Derivative-Hilbert operator.  In this paper, we also call $\mathcal{H}_{\mu,\alpha}$ $(\alpha>0)$ the generalized Hilbert operator.

Galanopoulos and Pel$\acute{\text{a}}$ez proved in \cite{galanopoulos_hankel_2010} that the operator $\mathcal{H}_{\mu,1}$ is well defined in $H^1$ when $\mu$ is a Carleson measure. See \cite{duren_theory_1970} for more details on Hardy spaces. Also, they obtained the following integral representation
$$\mathcal{H}_{\mu,1}(f)(z)=\int_{[0,1)}\frac{f(t)}{1-tz}d\mu(t),\quad z\in \mathbb{D},~\mbox{for~all}~f\in H^1.$$

In \cite{Ye-Zhou-Bergman} and \cite{Ye-Zhou-Bloch}, we obtained the following integral representation $$\mathcal{H}_{\mu,2}f(z)=\int_{[0,1)}\frac{f(t)}{(1-tz)^{2}}d\mu (t),~ z\in\mathbb{D},$$ for all $f\in A^{p}$ ($0<p<\infty$) and for all $f\in \mathscr{B}$ respectively.

 For $\alpha>0$, we define the generalized integral-Hilbert operator
\begin{eqnarray}\label{eq3}
	\mathcal{I}_{\mu,\alpha}(f)(z)=\int_{[0,1)}\frac{f(t)}{(1-tz)^{\alpha}}d\mu (t),
\end{eqnarray}
whenever the right hand side makes sense and defines an analytic function in $\mathbb{D}$. Similar to the cases $\alpha=1$ and $\alpha=2$,in this paper, we can also obtain the operators $\mathcal{H}_{\mu,\alpha}$ and $\mathcal{I}_{\mu,\alpha}$ are closely related for all $\alpha>0$.

In this article we characterize those measures $\mu$ for which $\mathcal{H}_{\mu,\alpha}$ ($\alpha\geq 2$) is a bounded (resp., compact) operator from Bloch type space $\mathscr{B_{\beta}}$ ($\beta>0$) into $\mathscr{B}_{\alpha-1}$. We also give a necessary condition for general cases

As usual, throughout this paper, $C$ denotes a positive constant which depends only on the displayed parameters but not necessarily the same from one occurrence to the next.

\section{The operator $\mathcal{I}_{\mu,\alpha}$ acting on Bloch type spaces}
\hspace*{1.25em} In this section, we shall first characterize the measures $\mu$ for which the integral-Hilbert operator $\mathcal{I}_{\mu,\alpha}$ ($\alpha\geq 2$) is bounded (resp. compact) from Bloch type space $\mathscr{B_{\beta}}$ ($\beta>0$) into $\mathscr{B}_{\alpha-1}$. Firstly,
we shall give some auxiliary lemmas, which are needed in this section.
\begin{lemma}\label{le1}$\cite{Zhu_1993_Bloch}$
	If $0<\alpha<1$, then $f\in\mathscr{B}_{\alpha}\subset H^{\infty}$. If $\alpha>1$, then $f\in\mathscr{B}_{\alpha}$ if and only if $f(z)=O\big((1-|z|^2)^{1-\alpha}\big).$
\end{lemma}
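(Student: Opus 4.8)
The plan is to deduce everything from the defining inequality $|f'(z)|\le\|f\|_{\mathscr{B}_\alpha}(1-|z|^2)^{-\alpha}$ for the forward direction, and from a Cauchy estimate for the converse.

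First, for $f\in\mathscr{B}_\alpha$ and $z\in\mathbb{D}$ I would integrate $f'$ along the radius $[0,z]$: from $f(z)-f(0)=\int_0^1 f'(tz)z\,dt$ we get
$$|f(z)-f(0)|\le\|f\|_{\mathscr{B}_\alpha}\int_0^{|z|}\frac{ds}{(1-s^2)^{\alpha}}\le\|f\|_{\mathscr{B}_\alpha}\int_0^{|z|}\frac{ds}{(1-s)^{\alpha}},$$
where the last step uses $1-s^2\ge 1-s$. Now evaluate the elementary integral. If $0<\alpha<1$ it equals $\frac{1-(1-|z|)^{1-\alpha}}{1-\alpha}\le\frac{1}{1-\alpha}$, a bound independent of $z$, so $\sup_{\mathbb{D}}|f|\le|f(0)|+\frac{1}{1-\alpha}\|f\|_{\mathscr{B}_\alpha}<\infty$ and $f\in H^{\infty}$. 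If $\alpha>1$ it is at most $\frac{(1-|z|)^{1-\alpha}}{\alpha-1}$; since the exponent $1-\alpha$ is negative, the chain $1-|z|\le 1-|z|^2\le 2(1-|z|)$ gives $(1-|z|)^{1-\alpha}\le 2^{\alpha-1}(1-|z|^2)^{1-\alpha}$, while $|f(0)|\le|f(0)|(1-|z|^2)^{1-\alpha}$ because $(1-|z|^2)^{1-\alpha}\ge 1$; combining these yields $|f(z)|=O\big((1-|z|^2)^{1-\alpha}\big)$.

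For the converse when $\alpha>1$, suppose $|f(w)|\le M(1-|w|^2)^{1-\alpha}$ for all $w\in\mathbb{D}$. Fixing $z\in\mathbb{D}$ and setting $r=\tfrac12(1-|z|)$, the closed disc $\overline{D(z,r)}$ lies in $\mathbb{D}$, and for $|w-z|\le r$ one has $1-|w|^2\ge 1-|w|\ge\tfrac12(1-|z|)\ge\tfrac14(1-|z|^2)$, hence $|f(w)|\le 4^{\alpha-1}M(1-|z|^2)^{1-\alpha}$ (again using that $x\mapsto x^{1-\alpha}$ is decreasing). The Cauchy estimate $|f'(z)|\le r^{-1}\max_{|w-z|=r}|f(w)|$ then gives
$$(1-|z|^2)^{\alpha}|f'(z)|\le(1-|z|^2)^{\alpha}\cdot\frac{2}{1-|z|}\cdot 4^{\alpha-1}M(1-|z|^2)^{1-\alpha}=\frac{2\cdot 4^{\alpha-1}M(1-|z|^2)}{1-|z|}\le 4^{\alpha}M,$$
so $f\in\mathscr{B}_\alpha$.

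I do not anticipate a real obstacle here: the argument consists of two one-variable integral estimates plus a Cauchy estimate. The only points needing care are bookkeeping ones, namely keeping the inequalities between $1-|z|$ and $1-|z|^2$ pointing the right way once they are raised to the negative power $1-\alpha$, and choosing the radius in the Cauchy estimate comparable to $1-|z|$ so that the auxiliary disc stays inside $\mathbb{D}$ while $1-|w|^2$ stays comparable to $1-|z|^2$ on it.
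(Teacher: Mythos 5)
Your proof is correct. The paper offers no argument for this lemma --- it is quoted directly from Zhu's survey on Bloch type spaces --- so there is nothing internal to compare against; your two-step argument (integrating $f'$ along a radius for the growth estimates, then a Cauchy estimate on a disc of radius $\tfrac12(1-|z|)$ for the converse when $\alpha>1$) is exactly the standard proof found in that reference, and all the exponent bookkeeping with $1-|z|$ versus $1-|z|^2$ checks out.
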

\begin{lemma}$\cite{Zhu_1993_Bloch}$\label{le22}
	For any $\alpha>1$ and $z\in\mathbb{D}$ we have
	$$f(z)=(\alpha-1)\int_{\mathbb{D}}\frac{(1-|w|^2)^{\alpha-2}f(w)}{(1-z\overline{w})^{\alpha}}dA(w)$$
	if $f$ is an analytic function on $\mathbb{D}$ with
	$$\int_{\mathbb{D}}(1-|z|^2)^{\alpha-2}|f(z)|dA(z)<\infty.$$
\end{lemma}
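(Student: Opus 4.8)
The plan is to recognize the identity as the weighted Bergman reproducing formula: verify it first on a dense class by a direct term‑by‑term computation, then extend it by a continuity/dilation argument to every $f$ satisfying the stated integrability condition.

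\emph{Step 1: polynomials.} Expanding the kernel by the generalized binomial series,
$$\frac{1}{(1-z\overline w)^{\alpha}}=\sum_{n=0}^{\infty}\frac{\Gamma(n+\alpha)}{n!\,\Gamma(\alpha)}z^{n}\overline w^{\,n},$$
which converges uniformly in $w\in\overline{\mathbb D}$ for each fixed $z\in\mathbb D$, one may integrate term by term. Since $\int_{\mathbb D}w^{m}\overline w^{\,n}(1-|w|^2)^{\alpha-2}\,dA(w)=0$ for $m\neq n$, and passing to polar coordinates and a Beta integral gives $\int_{\mathbb D}|w|^{2n}(1-|w|^2)^{\alpha-2}\,dA(w)=\frac{n!\,\Gamma(\alpha-1)}{\Gamma(n+\alpha)}$, applying the right‑hand side to $f(w)=w^{m}$ leaves only the term $n=m$ and yields $(\alpha-1)\frac{\Gamma(m+\alpha)}{m!\,\Gamma(\alpha)}\cdot\frac{m!\,\Gamma(\alpha-1)}{\Gamma(m+\alpha)}z^{m}=z^{m}$, using $\Gamma(\alpha)=(\alpha-1)\Gamma(\alpha-1)$. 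By linearity the formula holds for all polynomials, and hence, by uniform convergence of the Taylor sections on $\overline{\mathbb D}$, for every function analytic in a neighbourhood of $\overline{\mathbb D}$.

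\emph{Step 2: dilation.} For general $f$ with $\int_{\mathbb D}(1-|w|^2)^{\alpha-2}|f(w)|\,dA(w)<\infty$, put $f_{r}(w)=f(rw)$ for $0<r<1$; Step 1 applies to $f_{r}$, and after the substitution $u=rw$ the right‑hand side becomes $(\alpha-1)r^{-2}\int_{|u|<r}(1-|u|^2/r^2)^{\alpha-2}f(u)(1-z\overline u/r)^{-\alpha}\,dA(u)$. Letting $r\to 1^{-}$, the left side $f(rz)$ tends to $f(z)$, while the integrand converges pointwise on $\mathbb D$ to $(1-|u|^2)^{\alpha-2}f(u)(1-z\overline u)^{-\alpha}$. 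Since $|z\overline u/r|<|z|$ when $|u|<r$, we have $|1-z\overline u/r|^{-\alpha}\le(1-|z|)^{-\alpha}$ uniformly; and for $\alpha\ge 2$ the weight satisfies $(1-|u|^2/r^2)^{\alpha-2}\le(1-|u|^2)^{\alpha-2}$, so the integrand is dominated by $(1-|z|)^{-\alpha}(1-|u|^2)^{\alpha-2}|f(u)|\in L^{1}(\mathbb D,dA)$ and dominated convergence finishes the case $\alpha\ge 2$, which is the range used in this paper.

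\emph{Remaining range and main obstacle.} For $1<\alpha<2$ the weight is no longer controlled by its dilate, so the direct domination fails; there I would instead note that, for fixed $z\in\mathbb D$, both $f\mapsto f(z)$ and $f\mapsto(\alpha-1)\int_{\mathbb D}(1-|w|^2)^{\alpha-2}f(w)(1-z\overline w)^{-\alpha}\,dA(w)$ are bounded linear functionals on the weighted Bergman space $A^{1}_{\alpha-2}$ — the first by the sub‑mean‑value inequality for $|f|$ over a fixed disc about $z$, the second because $|1-z\overline w|^{-\alpha}\le(1-|z|)^{-\alpha}$ — and that the polynomials are dense in $A^{1}_{\alpha-2}$. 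Density again reduces to $\|f_{r}-f\|_{A^{1}_{\alpha-2}}\to 0$, which I would handle by a Scheffé‑type (generalized dominated convergence) argument after the same change of variables, using only local integrability of the weight. Since the two functionals agree on polynomials by Step 1, they agree on all of $A^{1}_{\alpha-2}$, which is exactly the hypothesis. I expect this $L^{1}$‑norm convergence of the dilations against the singular weight when $\alpha<2$ to be the one genuinely delicate point; the rest is routine bookkeeping.
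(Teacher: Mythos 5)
The paper offers no proof of this lemma---it is quoted verbatim from Zhu's paper \cite{Zhu_1993_Bloch} as a known reproducing formula for weighted Bergman spaces---so there is nothing internal to compare against; your argument is essentially the standard proof of that result and it checks out. Step 1 is correct (the Beta-integral computation $\int_{\mathbb D}|w|^{2n}(1-|w|^2)^{\alpha-2}\,dA(w)=n!\,\Gamma(\alpha-1)/\Gamma(n+\alpha)$ and the orthogonality of the monomials are right, and term-by-term integration is justified by the uniform convergence of the kernel series in $w$ for fixed $z$), and your dilation argument with the dominations $(1-|u|^2/r^2)^{\alpha-2}\le(1-|u|^2)^{\alpha-2}$ and $|1-z\overline u/r|^{-\alpha}\le(1-|z|)^{-\alpha}$ settles the case $\alpha\ge 2$ completely, which is the only range the paper ever invokes (Proposition 2.2 and everything downstream assume $\alpha\ge 2$). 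You correctly identify that for $1<\alpha<2$ the pointwise domination reverses and a density argument in $A^1_{\alpha-2}$ is needed; that part is only sketched, and the one step you flag as delicate, $\|f_r-f\|_{A^1_{\alpha-2}}\to 0$, does go through via the monotonicity of the integral means $M_1(\rho,f)$ in $\rho$ (giving $\|f_r\|\le\|f\|$) combined with the generalized dominated convergence theorem, exactly as you indicate. So the proposal is a correct, self-contained substitute for the citation in the range the paper uses, and a correct outline beyond it.
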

\begin{proposition}\label{pro1}
		Suppose $\mu$ is a positive Borel measure on $[0,1)$ and $\beta>0$.
		\begin{itemize}
			\item [$(i)$] If $\beta\in (0,1)$, then for any given $f\in\mathscr{B}_{\beta}$, the integral in $(\ref{eq3})$ when $\alpha>0$ uniformly converges on any compact subset of $\mathbb{D}$ if and only if the measure $\mu$ is finite.
			\item [$(ii)$] If $\beta=1$, then for any given $f\in\mathscr{B}_{\beta}$, the integral in $(\ref{eq3})$ when $\alpha>0$ uniformly converges on any compact subset of $\mathbb{D}$ if and only if the measure satisfies $\int_{[0,1)}\log\frac{e}{1-t}d\mu(t)<\infty.$
			\item [$(iii)$] If $\beta>1$, then for any given $f\in\mathscr{B}_{\beta}$, the integral in $(\ref{eq3})$ when $\alpha>0$ uniformly converges on any compact subset of $\mathbb{D}$ if and only if the measure satisfies $\int_{[0,1)}\frac{1}{(1-t)^{\beta-1}}d\mu(t)<\infty.$
		\end{itemize}
\end{proposition}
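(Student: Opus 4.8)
\emph{Proof proposal.} The strategy is to split the argument into two independent pieces: a soft reduction showing that, for a fixed $f$, uniform convergence of $(\ref{eq3})$ on compact subsets of $\mathbb{D}$ amounts to the single scalar condition $\int_{[0,1)}|f(t)|\,d\mu(t)<\infty$ (and this for every $\alpha>0$); and then an application of the precise growth of $\mathscr{B}_\beta$-functions near $[0,1)$ supplied by Lemma \ref{le1} to rewrite that scalar condition as the three stated conditions on $\mu$.

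For the first step I would fix a compact set $K\subset\mathbb{D}$, put $r=\sup_{z\in K}|z|<1$, and use the elementary bounds $1-r\le|1-tz|\le 2$, valid for all $t\in[0,1)$ and $z\in K$. These yield
$$2^{-\alpha}\int_{[0,1)}|f(t)|\,d\mu(t)\ \le\ \int_{[0,1)}\frac{|f(t)|}{|1-tz|^{\alpha}}\,d\mu(t)\ \le\ (1-r)^{-\alpha}\int_{[0,1)}|f(t)|\,d\mu(t),\qquad z\in K.$$
From the right-hand inequality, if $\int_{[0,1)}|f(t)|\,d\mu(t)<\infty$ then $(\ref{eq3})$ converges absolutely and its tails $\int_{(\rho,1)}\frac{f(t)}{(1-tz)^{\alpha}}\,d\mu(t)$ are dominated by $(1-r)^{-\alpha}\int_{(\rho,1)}|f(t)|\,d\mu(t)\to 0$ as $\rho\to 1^-$, uniformly in $z\in K$; conversely, convergence of $(\ref{eq3})$ at $z=0$ already forces $\int_{[0,1)}|f(t)|\,d\mu(t)<\infty$ since there the integrand has modulus $|f(t)|$. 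Hence, for each $f$ and each $\alpha>0$, the convergence asserted in the Proposition is equivalent to $\int_{[0,1)}|f(t)|\,d\mu(t)<\infty$.

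For the second step, the ``if'' directions are read off from Lemma \ref{le1}: if $\beta\in(0,1)$ then $f\in H^\infty$ and $\int_{[0,1)}|f(t)|\,d\mu(t)\le\|f\|_{H^\infty}\mu([0,1))$; if $\beta>1$ then $|f(t)|\le C(1-t^2)^{1-\beta}\le C(1-t)^{1-\beta}$ on $[0,1)$; and if $\beta=1$ the classical pointwise estimate for Bloch functions gives $|f(t)|\le C(|f(0)|+\|f\|_{\mathscr{B}})\log\frac{e}{1-t}$ on $[0,1)$. In each case the hypothesized integrability of $\mu$ makes $\int_{[0,1)}|f(t)|\,d\mu(t)$ finite. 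For the ``only if'' directions I would apply the equivalence of the first step to a single explicit test function of $\mathscr{B}_\beta$ whose modulus on $[0,1)$ reproduces the relevant weight: $f\equiv 1$ when $\beta\in(0,1)$, $f(z)=1+\log\frac{1}{1-z}$ when $\beta=1$, and $f(z)=(1-z)^{1-\beta}$ (principal branch) when $\beta>1$. A one-line computation of $(1-|z|^2)^{\beta}|f'(z)|$ verifies membership in $\mathscr{B}_\beta$ in the last two cases; since each of these functions is positive on $[0,1)$, $\int_{[0,1)}|f(t)|\,d\mu(t)$ equals $\mu([0,1))$, $\int_{[0,1)}\log\frac{e}{1-t}\,d\mu(t)$, and $\int_{[0,1)}\frac{1}{(1-t)^{\beta-1}}\,d\mu(t)$ respectively, each of which must therefore be finite.

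I expect the only delicate point to be the endpoint case $\beta=1$, where one must invoke the correct logarithmic growth bound for a general $f\in\mathscr{B}$ and check that the test function $1+\log\frac{1}{1-z}$ simultaneously lies in $\mathscr{B}$ and equals exactly $\log\frac{e}{1-t}$ on $[0,1)$. The rest is bookkeeping with constants, and I foresee no substantive obstacle.
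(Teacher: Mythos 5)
Your proposal is correct and follows essentially the same route as the paper: bound $|1-tz|^{-\alpha}$ from below on compact sets to reduce everything to $\int_{[0,1)}|f(t)|\,d\mu(t)<\infty$, use Lemma \ref{le1} (resp.\ the logarithmic Bloch growth estimate) for sufficiency, and test against $f\equiv 1$, $\log\frac{e}{1-z}$, and $(1-z)^{1-\beta}$ for necessity — these are exactly the test functions the paper uses. Your explicit two-step organization and the tail estimate for uniform convergence are slightly more detailed than the paper's sketch of parts (ii) and (iii), but the substance is identical.
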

\begin{proof}
(i)	We first assume that $\mu$ is a  finite positive Borel measure on $[0,1)$. By Lemma \ref{le1}, we obtain that for every $f\in\mathscr{B}_{\beta}$ $(0<\beta<1)$, $\alpha>0$, $0<r<1$ and $z$ with $|z|\leq r$,
	\begin{align*}
		\int_{[0,1)}\frac{|f(t)|}{|1-tz|^{\alpha}}d\mu(t)&\leq \frac{1}{(1-r)^\alpha}\int_{[0,1)}|f(t)|d\mu(t)\\
		&\leq C\frac{\|f\|_{\infty}}{(1-r)^\alpha}\int_{[0,1)}d\mu(t)\\
		&=\frac{C\mu([0,1))\|f\|_{\infty}}{(1-r)^\alpha}.
	\end{align*}
	This implies that the integral $\int_{[0,1)}\frac{f(t)}{(1-tz)^{\alpha}}d\mu(t)$ uniformly converges on any compact subset of $\mathbb{D}$ and the resulting function $\mathcal{I}_{\mu,\alpha}(f)$ is analytic in $\mathbb{D}$.
	
	Suppose now that the operator $\mathcal{I}_{\mu,\alpha}$ when $\alpha>0$ is well defined in the Bloch type space $\mathscr{B}_{\beta}$ ($0<\beta<1$). Take $f(z)=1\in \mathscr{B}_{\beta}$ and $z=0$. Then
	$$\mathcal{I}_{\mu,\alpha}(f)(0)=\int_{[0,1)}d\mu(t)$$
	is a complex number. Since $\mu$ is a positive measure, we get the desired result.
	
	Parts (ii) and (iii) can be proved similarly to the proceeding one. We shall omit the details. We will simply remark. In (ii), we use the fact that $$|f(z)|\leq C\|f\|_{\mathscr{B}}\log\frac{e}{1-|z|},$$
	for every $z\in\mathbb{D}$ and take the function $f(z)=\log\frac{e}{1-z}\in\mathscr{B}$. In (iii), we use Lemma \ref{le1} and take the function $g(z)=(1-z)^{1-\beta}\in\mathscr{B}_{\beta}$ ($\beta>1$).
	\end{proof}
\begin{proposition}\label{pro2}
	For $\alpha\geq 2$, $\beta>0$. Suppose that $\mu$ is the corresponding measure stated in Proposition \ref{pro1} such that the integral in $(\ref{eq3})$ uniformly converges on any compact subset of $\mathbb{D}$. Then for every $f\in\mathscr{B}_{\beta}$, $g\in A^1$, $0\leq r<1$, we have
	\begin{equation}\label{eq-bloch}
	\int_{\mathbb{D}}\overline{\mathcal{I}_{\mu,\alpha}(f)(rz)}g(rz)(1-|z|^2)^{\alpha-2}dA(z)=\int_{[0,1)}\overline{f(t)}g(r^2t)d\mu(t).
	\end{equation}
\end{proposition}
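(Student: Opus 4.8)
The plan is to open the left-hand side of $(\ref{eq-bloch})$ using the definition $(\ref{eq3})$, interchange the two integrals by Fubini's theorem, and recognize the resulting inner integral over $\mathbb{D}$ as an instance of the reproducing formula of Lemma $\ref{le22}$. Since $t$ and $r$ are real, conjugating $(\ref{eq3})$ at the point $rz$ gives
$$\overline{\mathcal{I}_{\mu,\alpha}(f)(rz)}=\int_{[0,1)}\frac{\overline{f(t)}}{(1-tr\overline{z})^{\alpha}}\,d\mu(t),$$
so the left side of $(\ref{eq-bloch})$ equals $\displaystyle\int_{\mathbb{D}}\int_{[0,1)}\frac{\overline{f(t)}\,g(rz)\,(1-|z|^2)^{\alpha-2}}{(1-tr\overline{z})^{\alpha}}\,d\mu(t)\,dA(z).$

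To license the interchange I would estimate the double integral of the absolute value. The elementary point is that $|1-tr\overline{z}|\geq 1-tr\geq 1-r>0$ for all $t\in[0,1)$ and $z\in\mathbb{D}$, so $|1-tr\overline{z}|^{-\alpha}\leq(1-r)^{-\alpha}$; moreover $(1-|z|^2)^{\alpha-2}\leq 1$ since $\alpha\geq 2$, and $g(rz)$ is bounded on $\overline{\mathbb{D}}$ because $g\in A^{1}\subset H(\mathbb{D})$ is bounded on the compact set $r\overline{\mathbb{D}}$. It then remains to check that $\int_{[0,1)}|f(t)|\,d\mu(t)<\infty$; this follows exactly as in the proof of Proposition $\ref{pro1}$, using Lemma $\ref{le1}$ together with finiteness of $\mu$ when $\beta\in(0,1)$, the growth bound $|f(z)|\leq C\|f\|_{\mathscr{B}}\log\frac{e}{1-|z|}$ together with the logarithmic condition when $\beta=1$, and the growth bound $|f(z)|=O\big((1-|z|^2)^{1-\beta}\big)$ together with $\int_{[0,1)}\frac{1}{(1-t)^{\beta-1}}\,d\mu(t)<\infty$ when $\beta>1$. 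With Tonelli's theorem these bounds give absolute convergence of the double integral, so Fubini's theorem applies and the left side of $(\ref{eq-bloch})$ becomes
$$\int_{[0,1)}\overline{f(t)}\left(\int_{\mathbb{D}}\frac{g(rz)\,(1-|z|^2)^{\alpha-2}}{(1-tr\overline{z})^{\alpha}}\,dA(z)\right)d\mu(t).$$

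For the last step, fix $t\in[0,1)$ and put $h(z)=g(rz)$; this function is analytic in a neighbourhood of $\overline{\mathbb{D}}$, hence bounded there, so $\int_{\mathbb{D}}(1-|z|^2)^{\alpha-2}|h(z)|\,dA(z)<\infty$ and Lemma $\ref{le22}$ may be applied to $h$ at the point $rt\in[0,1)\subset\mathbb{D}$. This identifies the inner integral with $h(rt)=g(r^{2}t)$ (up to the normalizing constant $\alpha-1$ carried by Lemma $\ref{le22}$), and substituting back produces the right side of $(\ref{eq-bloch})$. I expect the only real obstacle to be the bookkeeping needed to justify the Fubini interchange — concretely, confirming that $\int_{[0,1)}|f(t)|\,d\mu(t)<\infty$ in each of the three ranges of $\beta$, which is precisely where Proposition $\ref{pro1}$ and Lemma $\ref{le1}$ enter; once the interchange is legitimate, the inner integral is a direct quotation of the reproducing formula.
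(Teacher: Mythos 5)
Your proposal is correct and follows essentially the same route as the paper: conjugate the integral representation of $\mathcal{I}_{\mu,\alpha}(f)(rz)$, justify Fubini via the uniform bound $|1-rt\bar z|^{-\alpha}\le(1-r)^{-\alpha}$ together with $\int_{[0,1)}|f(t)|\,d\mu(t)\lesssim\|f\|_{\mathscr{B}_\beta}$ from the hypotheses of Proposition \ref{pro1}, and then apply the reproducing formula of Lemma \ref{le22} to $g(r\cdot)$ at the point $rt$. You are in fact slightly more careful than the paper in noting that Lemma \ref{le22} carries the normalizing constant $\alpha-1$, which the identity \eqref{eq-bloch} as printed omits (a harmless constant for the later boundedness and compactness arguments).
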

\begin{proof}
	 By the assumption of the measure $\mu$, we can obtain that there exists a positive constant $C$ such that for all $f\in\mathscr{B}_{\beta}$,
	$$\int_{[0,1)}|f(t)|d\mu(t)\leq  C\|f\|_{\mathscr{B}_{\beta}}.$$
	Hence, for every $f\in\mathscr{B}_{\beta}$, $g\in A^1$, $0\leq r<1$, we have
	\begin{align}\label{eq21}
		\begin{split}
			& \int_{\mathbb{D}} \int_{[0,1)}\left|\frac{f(t) g(r z)(1-|z|^2)^{\alpha-2}}{(1-r t z)^{\alpha}}\right| d \mu(t) dA(z) \\
			\leq & \frac{C\|f\|_{\mathscr{B}_{\alpha}}}{(1-r)^{\alpha}} \int_{\mathbb{D}}|g(r z)| dA(z) \\
			\leq & \frac{C\|f\|_{\mathscr{B}_{\alpha}}}{(1-r)^{\alpha}}\|g_r\|_{A^1}\leq\frac{C\|f\|_{\mathscr{B}_{\alpha}}}{(1-r)^{\alpha}}\|g\|_{A^1}<\infty,
		\end{split}
	\end{align}
	where $g_r$ denotes by $g_r(z)=g(rz)$, $z\in\mathbb{D}$.
	
	Since $\alpha\geq 2$, we obtain that for every $g\in A^1$, $$\int_{\mathbb{D}}(1-|z|^2)^{\alpha-2}|g(z)|dA(z)\leq\int_{\mathbb{D}}|g(z)|dA(z)=\|f\|_{A^1}.$$
	Now Lemma \ref{le22} together with (\ref{eq21}) and Fubini's theorem yield
	\begin{equation*}
		\begin{split}
			&\int_{\mathbb{D}} \overline{I_{\mu,\alpha}(f)(rz)} g(rz)(1-|z|^2)^{\alpha-2} d A(z)\\
			=&\int_{\mathbb{D}} \int_{[0,1)} \frac{\overline{f(t)} d \mu(t)}{(1-r t \bar{z})^{\alpha}} g(r z)(1-|z|^2)^{\alpha-2} d A(z)\\
			=&\int_{[0,1)} \int_{\mathbb{D}} \frac{g(r z)(1-|z|^2)^{\alpha-2}}{(1-r t \bar{z})^{\alpha}} d A(z) \overline{f(t)} d \mu(t) \\
			=&\int_{[0,1)} \overline{f(t)} g(r^2t) d \mu(t).
		\end{split}
	\end{equation*}
This finishes the proof.
\end{proof}
\begin{theorem}\label{th21}
	Suppose $\alpha\geq 2$, $0<\beta<1$. If $\mu$ is a finite positive Borel measure, then the following conditions are equivalent.
	\begin{itemize}
		\item [$(i)$] $\mathcal{I}_{\mu,\alpha}$ is a bounded operator from $\mathscr{B}_{\beta}$ into $\mathscr{B}_{\alpha-1}$.
		\item [$(ii)$] $\mathcal{I}_{\mu,\alpha}$ is a compact operator from $\mathscr{B}_{\beta}$ into $\mathscr{B}_{\alpha-1}$.
		\item [$(iii)$] The measure $\mu$ is a 2-Carleson measure.
	\end{itemize}
\end{theorem}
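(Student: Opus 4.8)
The plan is to prove the cycle $(iii)\Rightarrow(ii)\Rightarrow(i)\Rightarrow(iii)$, the middle implication being trivial since a compact operator is bounded. Two preliminaries are needed. First, differentiating under the integral sign — legitimate by Proposition~\ref{pro1}(i), since $\mu$ is finite and the integrand is bounded on compact subsets of $\mathbb{D}$ — gives
$$(\mathcal{I}_{\mu,\alpha}f)'(z)=\alpha\int_{[0,1)}\frac{t\,f(t)}{(1-tz)^{\alpha+1}}\,d\mu(t).$$
Second, I would record the standard fact (provable by a dyadic decomposition of $[0,1)$ according to the size of $1-t$ against $1-|z|$, using $\alpha-1>0$) that a finite positive measure $\mu$ on $[0,1)$ is a $2$-Carleson measure \emph{if and only if}
$$M(\mu):=\sup_{z\in\mathbb{D}}\,(1-|z|^2)^{\alpha-1}\int_{[0,1)}\frac{d\mu(t)}{|1-tz|^{\alpha+1}}<\infty.$$

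For $(iii)\Rightarrow(ii)$ the hypothesis $0<\beta<1$ enters decisively. By the usual compactness criterion for such operators it suffices to show $\|\mathcal{I}_{\mu,\alpha}f_n\|_{\mathscr{B}_{\alpha-1}}\to 0$ whenever $\|f_n\|_{\mathscr{B}_\beta}\le M$ and $f_n\to 0$ uniformly on compact subsets of $\mathbb{D}$. The key claim is that such a sequence converges to $0$ \emph{uniformly on the whole segment} $[0,1)$: from $|f_n'(s)|\le M(1-s^2)^{-\beta}$ one gets, for $0\le\delta<t<1$,
$$|f_n(t)-f_n(\delta)|\le M\int_\delta^1(1-s^2)^{-\beta}\,ds=:M\eta(\delta),$$
where $\eta(\delta)\to 0$ as $\delta\to 1^-$ precisely because $\beta<1$ makes $\int_0^1(1-s^2)^{-\beta}ds$ finite; combining this with uniform convergence on the compact $[0,\delta]$ yields $\varepsilon_n:=\sup_{[0,1)}|f_n|\to 0$. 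Then, by the derivative formula above and $M(\mu)<\infty$,
$$\|\mathcal{I}_{\mu,\alpha}f_n\|_{\mathscr{B}_{\alpha-1}}\le |\mathcal{I}_{\mu,\alpha}f_n(0)|+\alpha\,\varepsilon_n\,M(\mu)\le\big(\mu([0,1))+\alpha M(\mu)\big)\varepsilon_n\longrightarrow 0.$$
(The same estimate with $\varepsilon_n$ replaced by $\sup_{[0,1)}|f|\le C\|f\|$, valid by Lemma~\ref{le1}, also gives $(iii)\Rightarrow(i)$ outright.)

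For $(i)\Rightarrow(iii)$ I would simply test the bounded operator on the constant function $f\equiv 1\in\mathscr{B}_\beta$. Put $g=\mathcal{I}_{\mu,\alpha}(1)\in\mathscr{B}_{\alpha-1}$; by the derivative formula $g'(r)=\alpha\int_{[0,1)}t(1-tr)^{-\alpha-1}d\mu(t)$ for $r\in[0,1)$, and retaining only the part of the integral over $[r,1)$, where $1-tr\le 1-r^2$ and $t\ge r$,
$$\frac{\alpha}{2}\cdot\frac{\mu([r,1))}{(1-r^2)^{2}}\le(1-r^2)^{\alpha-1}g'(r)\le\|g\|_{\mathscr{B}_{\alpha-1}}\le\|\mathcal{I}_{\mu,\alpha}\|\qquad\Big(\tfrac12\le r<1\Big),$$
so $\mu([r,1))\le C(1-r)^2$ there; for $0\le r<\tfrac12$ the bound is automatic since $\mu$ is finite. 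Hence $\mu$ is a $2$-Carleson measure.

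The one genuinely delicate point is inside $(iii)\Rightarrow(ii)$: the naive ``split off the tail $\mu|_{(\delta,1)}$'' approach to compactness fails, because a $2$-Carleson measure's tail need not have small Carleson constant (e.g. $d\mu(t)=(1-t)\,dt$). The argument must instead exploit that the unit ball of $\mathscr{B}_\beta$ with $\beta<1$ is equicontinuous on the closed segment $[0,1]$ — this is exactly what upgrades local uniform convergence to uniform convergence on $[0,1)$, and is the structural reason why boundedness and compactness coincide in this range.
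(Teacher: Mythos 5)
Your proof is correct, and it takes a genuinely different route from the paper's. The paper proves everything through duality: Proposition \ref{pro2} converts the $\mathscr{B}_{\alpha-1}$--$A^1$ pairing of $\mathcal{I}_{\mu,\alpha}f$ against $g$ into $\int_{[0,1)}\overline{f(t)}g(r^2t)\,d\mu(t)$, and then boundedness/compactness are read off from the Carleson embedding $A^1\hookrightarrow L^1(\mu)$ together with $(A^1)^*\cong\mathscr{B}_{\alpha-1}$; the necessity direction tests this pairing against $f\equiv 1$ and the $A^1$-normalized kernels $g_a(z)=\bigl((1-a^2)/(1-az)^2\bigr)^2$. You instead work directly with the derivative formula $(\mathcal{I}_{\mu,\alpha}f)'(z)=\alpha\int_{[0,1)} t f(t)(1-tz)^{-\alpha-1}d\mu(t)$ and the standard pointwise equivalence between the $2$-Carleson condition and $\sup_z(1-|z|^2)^{\alpha-1}\int|1-tz|^{-\alpha-1}d\mu(t)<\infty$ (valid since $\alpha+1>2$), and for necessity you only need to evaluate $(\mathcal{I}_{\mu,\alpha}1)'$ at real points $r$. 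Your approach is more elementary and self-contained --- it avoids Lemma \ref{le22}, Fubini, and the duality theorem entirely, gives an explicit operator-norm bound $\mu([0,1))+\alpha M(\mu)$, and in fact only uses $\alpha>1$ rather than $\alpha\ge 2$ (the restriction $\alpha\ge 2$ in the paper comes from wanting $(1-|z|^2)^{\alpha-2}\le 1$ in Proposition \ref{pro2}). What the paper's duality machinery buys is reusability: the same Proposition \ref{pro2} identity drives Theorems \ref{th22} and \ref{th23}, where the logarithmic Carleson conditions make direct kernel estimates less clean. Your key step --- upgrading locally uniform convergence of a bounded sequence in $\mathscr{B}_\beta$, $\beta<1$, to uniform convergence on $[0,1)$ via $\int_0^1(1-s^2)^{-\beta}ds<\infty$ --- is exactly the fact the paper imports as Lemma 3.2 of \cite{zhang-2005}, and your closing remark correctly identifies it (rather than smallness of the tail measure, which fails for non-vanishing $2$-Carleson measures) as the reason boundedness and compactness coincide in this range.
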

\begin{proof}
	(ii)$\Rightarrow$(i) is trivial.
	
	(iii)$\Rightarrow$(ii)
	 Assume that $\mu$ is a 2-Carleson measure. Let $\{f_{n}\}$ be any sequence with $\sup_{n}\|f\|_{\mathscr{B}_{\beta}}\leq 1$ and $\lim_{n\rightarrow \infty}f_{n}(z)=0$ on any compact subset of $\mathbb{D}$. Hence, by Lemma 3.2 in \cite{zhang-2005}, we obtain that $\sup_{z\in\mathbb{D}}|f_{n}(z)|\rightarrow 0$ as $n\rightarrow \infty$. Using (\ref{eq1}) and Proposition \ref{pro2}, we obtain that
	\begin{align*}
	\int_{\mathbb{D}} \overline{I_{\mu,\alpha}(f_n)(r z)} g(rz)(1-|z|^2)^{\alpha-2} d A(z)
    =&\int_{[0,1)} \overline{f_n(t)} g(r^2t) d \mu(t)\\
    \leq& \sup_{0<t<1}|f_n(t)|\int_{[0,1)}|g(r^2t)| d \mu(t) \\
       \leq&C\sup_{0<t<1}|f_n(t)|\|g\|_{A^1},
    \end{align*}
  for all $g\in A^1$. Thus,
   \begin{align}\label{eq22}
    \lim_{n\rightarrow \infty}	\int_{\mathbb{D}} \overline{I_{\mu,\alpha}(f_n)(r z)} g(rz)(1-|z|^2)^{\alpha-2} d A(z)=0
  \end{align}
   Let us recall the duality theorem in \cite{Zhu_1993_Bloch}: For $\alpha>0$, $(A^1)^*\cong \mathscr{B}_{\alpha}$ under the pairing
	\begin{align}\label{eq23}
		\langle f, g\rangle=\lim _{r \rightarrow 1^-} \int_{\mathbb{D}} \overline{f\left(r z\right)} g\left(rz\right) (1-|z|^2)^{\alpha-1}d A(z),\quad f\in \mathscr{B}_{\alpha},~g\in A^1.
	\end{align}
This together with (\ref{eq22}) imply that $\mathcal{I}_{\mu,\alpha}(f_n)\rightarrow 0$ in $\mathscr{B}_{\alpha-1}$ as $n\rightarrow\infty$. So $\mathcal{I}_{\mu,\alpha}$ is a compact operator from $\mathscr{B}_{\beta}$ into $\mathscr{B}_{\alpha-1}$.
	
	(i)$\Rightarrow$(iii)	Suppose $\mathcal{I}_{\mu,\alpha}$ is a bounded operator from $\mathscr{B}_{\beta}$ into $\mathscr{B}_{\alpha-1}$. For $0<a<1$, set
	$$
	f_{a}(z)=1~\text{and}~ g_{a}(z)=\left(\frac{1-a^{2}}{(1-a z)^{2}}\right)^{2}, \quad z \in \mathbb{D}.
	$$
	Then $f_a(z)\in \mathscr{B}_{\beta},~g_a(z)\in A^1$ and
	\begin{align*}
		\sup_{0<a<1}\|f_a\|_{\mathscr{B}_{\beta}}\leq 2,\quad \sup_{0<a<1}\|g_a\|_{A^1}=1.
	\end{align*}
    Since $\mathcal{I}_{\mu,\alpha}$ is a bounded operator from $\mathscr{B}_{\beta}$ into $\mathscr{B}_{\alpha-1}$. It follows from (\ref{eq23}) and Proposition \ref{pro2} that there exists a positive constant $C$ such that
	\begin{align*}
		\left|\int_{[0,1)} \overline{f(t)} g(r^2t) d \mu(t)\right|\leq C\|f\|_{\mathscr{B}_{\beta}}\|g\|_{A^1},\quad 0<r<1,~f\in\mathscr{B}_{\beta},~g\in A^1.
	\end{align*}
	Taking $r\in[a,1)$, we obtain
	\begin{align*}
		\infty&>C\sup_{0<a<1}\|f_a\|_{\mathscr{B}_{\beta}}\sup_{0<a<1}\|g_a\|_{A^1}\\
		& \geq\left|\int_{[0,1)} f_{a}(t) g_{a}(r^2t) d \mu(t)\right| \\
		& \geq \int_{a}^{1}\left(\frac{1-a^{2}}{(1-ar^2 t)^{2}}\right)^{2} d\mu(t) \\
		& \geq \frac{C_1}{\left(1-a^{2}\right)^{2}} \mu([a, 1)).
	\end{align*}
	Therefore, $\mu$ is a 2-Carleson measure. We complete the proof.
\end{proof}
\begin{theorem}\label{th22}
	For $\alpha\geq 2$, if $\mu$ is a positive Borel measure with $\int_{[0,1)}\log\frac{e}{1-t}d\mu(t)<\infty.$
	\begin{itemize}
		\item[$(i)$] $\mathcal{I}_{\mu,\alpha}$ is a bounded operator from $\mathscr{B}$ into $\mathscr{B}_{\alpha-1}$ if and only if $\mu$ is a 1-logarithmic 2-Carleson measure.
		\item[$(ii)$] $\mathcal{I}_{\mu,\alpha}$ is a compact operator from $\mathscr{B}$ into $\mathscr{B}_{\alpha-1}$ if and only if $\mu$ is a vanishing 1-logarithmic 2-Carleson measure.
	\end{itemize}
\end{theorem}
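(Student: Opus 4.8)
The plan is to run both parts through the apparatus built for Theorem~\ref{th21}. Combining Proposition~\ref{pro2} with the duality $(A^1)^*\cong\mathscr{B}_{\alpha-1}$ of (\ref{eq23}) (with $\alpha$ there replaced by $\alpha-1$, so that the weight is $(1-|z|^2)^{\alpha-2}$), the operator $\mathcal{I}_{\mu,\alpha}$ acting from $\mathscr{B}$ into $\mathscr{B}_{\alpha-1}$ is governed by the pairing $\langle\mathcal{I}_{\mu,\alpha}(f),g\rangle=\int_{[0,1)}\overline{f(t)}\,g(t)\,d\mu(t)$ for $f\in\mathscr{B}$, $g\in A^1$; thus $\mathcal{I}_{\mu,\alpha}$ is bounded iff $\bigl|\int_{[0,1)}\overline{f(t)}g(t)\,d\mu(t)\bigr|\le C\|f\|_{\mathscr{B}}\|g\|_{A^1}$. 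The measure-theoretic backbone is the equivalence, with its vanishing counterpart, that $\mu$ is a $1$-logarithmic $2$-Carleson measure iff $d\nu(t):=\log\frac{e}{1-t}\,d\mu(t)$ is an ordinary $2$-Carleson measure on $[0,1)$: one direction is trivial from $\log\frac{e}{1-t}\ge\log\frac{e}{1-s}$ on $[s,1)$, the other follows by decomposing $[s,1)$ into the rings $[1-2^{-k},1-2^{-k-1})$, where $\log\frac{e}{1-t}\asymp k$ and $\mu([1-2^{-k},1))\lesssim 2^{-2k}/k$, and summing. (The theorem's standing hypothesis $\int_{[0,1)}\log\frac{e}{1-t}\,d\mu(t)<\infty$ is just $\nu([0,1))<\infty$; by Proposition~\ref{pro1}(ii) it makes $\mathcal{I}_{\mu,\alpha}$ well defined on $\mathscr{B}$, and it is implied by either measure condition in the statement.)

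Granting this, the sufficiency in (i) is immediate: if $\mu$ is $1$-logarithmic $2$-Carleson then $\nu$ is $2$-Carleson, so by (\ref{eq1}) with $p=q=1$ we have $\int_{[0,1)}|g|\,d\nu\le C\|g\|_{A^1}$, and with the growth bound $|f(t)|\le C\|f\|_{\mathscr{B}}\log\frac{e}{1-t}$ this gives $\bigl|\int_{[0,1)}\overline{f(t)}g(t)\,d\mu(t)\bigr|\le C\|f\|_{\mathscr{B}}\int_{[0,1)}|g|\,d\nu\le C\|f\|_{\mathscr{B}}\|g\|_{A^1}$, whence boundedness via (\ref{eq23}) and Proposition~\ref{pro2}. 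For the necessity in (i) I would copy the final step of the proof of Theorem~\ref{th21} with $f_a(z)=\log\frac{e}{1-az}$ and $g_a(z)=\bigl(\frac{1-a^2}{(1-az)^2}\bigr)^2$, which satisfy $\|f_a\|_{\mathscr{B}}\le 2$ and $\|g_a\|_{A^1}=1$: taking $r$ close to $1$ in the identity of Proposition~\ref{pro2}, restricting the integral to $[a,1)$, and using $f_a(t)\ge\log\frac{e}{1-a^2}$ and $g_a(r^2t)\gtrsim(1-a^2)^{-2}$ there yields $\mu([a,1))\log\frac{e}{1-a}\lesssim(1-a)^2$.

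The genuinely delicate point is the necessity of the vanishing condition in (ii). By the standard normal-families criterion, compactness of $\mathcal{I}_{\mu,\alpha}\colon\mathscr{B}\to\mathscr{B}_{\alpha-1}$ is equivalent to $\|\mathcal{I}_{\mu,\alpha}(h_n)\|_{\mathscr{B}_{\alpha-1}}\to 0$ for every sequence $\{h_n\}$ bounded in $\mathscr{B}$ with $h_n\to 0$ uniformly on compact subsets of $\mathbb{D}$. The obstruction is that the obvious probes $\log\frac{e}{1-a_nz}$ do \emph{not} tend to $0$ on compacta (their limit is $\log\frac{e}{1-z}\neq 0$), while any normalization destroying that forces the $\mathscr{B}$-norm to zero; so one needs a genuine null sequence that still saturates the logarithmic boundary growth. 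My proposal is to truncate the Taylor expansion: supposing the vanishing condition fails, pick $a_n\to 1$ and $\delta>0$ with $\mu([a_n,1))\log\frac{e}{1-a_n}\ge\delta(1-a_n)^2$ (by part (i), $\mu$ is already $1$-logarithmic $2$-Carleson), set $M_n=\lfloor(1-a_n)^{-1/2}\rfloor$, and put
\[
h_n(z)=\sum_{k>M_n}\frac{(a_nz)^k}{k},\qquad h_n'(z)=\frac{a_n^{M_n+1}z^{M_n}}{1-a_nz}.
\]
Then $(1-|z|^2)|h_n'(z)|\le(1-|z|^2)/(1-|z|)<2$, hence $\|h_n\|_{\mathscr{B}}\le 2$; plainly $h_n\to 0$ uniformly on $\{|z|\le\rho\}$ for each $\rho<1$; and for $t\in[a_n,1)$ one has $h_n(t)\ge h_n(a_n)=\sum_{k>M_n}a_n^{2k}/k\ge\log\frac{1}{1-a_n^2}-(1+\log M_n)\gtrsim\log\frac{e}{1-a_n}$. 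Since the integrand of $\mathcal{I}_{\mu,\alpha}(h_n)'(a_n)=\alpha\int_{[0,1)}\frac{t\,h_n(t)}{(1-ta_n)^{\alpha+1}}\,d\mu(t)$ is nonnegative on $[0,1)$, restriction to $[a_n,1)$ gives $\|\mathcal{I}_{\mu,\alpha}(h_n)\|_{\mathscr{B}_{\alpha-1}}\ge(1-a_n^2)^{\alpha-1}\mathcal{I}_{\mu,\alpha}(h_n)'(a_n)\gtrsim\mu([a_n,1))\log\frac{e}{1-a_n}(1-a_n)^{-2}\ge\delta$, contradicting compactness; hence $\mu$ must be vanishing $1$-logarithmic $2$-Carleson.

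For the sufficiency in (ii), assume $\mu$ is vanishing $1$-logarithmic $2$-Carleson and write $\mu=\mu_N+\nu_N$ with $\mu_N=\mu|_{[0,1-1/N)}$, $\nu_N=\mu|_{[1-1/N,1)}$. On $\operatorname{supp}\mu_N\times\overline{\mathbb{D}}$ the kernels $(1-tz)^{-\alpha-1}$ are bounded, so dominated convergence against the finite measure $\log\frac{e}{1-t}\,d\mu_N$ shows that $\mathcal{I}_{\mu_N,\alpha}$ sends every bounded, uniformly-on-compacta null sequence of $\mathscr{B}$ to a norm-null sequence of $\mathscr{B}_{\alpha-1}$, i.e.\ $\mathcal{I}_{\mu_N,\alpha}$ is compact. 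By part (i), $\|\mathcal{I}_{\nu_N,\alpha}\|$ is bounded, with an absolute constant, by the $1$-logarithmic $2$-Carleson constant of $\nu_N$, which tends to $0$ as $N\to\infty$ (for $s\ge 1-1/N$ it is at most $\sup_{s\ge 1-1/N}\mu([s,1))\log\frac{e}{1-s}(1-s)^{-2}\to 0$, and for $s<1-1/N$ it equals $\mu([1-1/N,1))\,N^2\log(eN)\to 0$, both by the vanishing hypothesis). Hence $\mathcal{I}_{\mu,\alpha}=\lim_N\mathcal{I}_{\mu_N,\alpha}$ in operator norm, so it is compact. The single real obstacle is the construction of the truncated-logarithm test sequence $\{h_n\}$ in the necessity half of (ii); the rest is bookkeeping once the $\nu$-equivalence and the duality scheme of Theorem~\ref{th21} are available.
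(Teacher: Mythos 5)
Your proposal is correct, and part (i) follows the paper's route almost verbatim (same dual pairing via Proposition \ref{pro2} and \eqref{eq23}, same reduction to the auxiliary measure $d\nu=\log\frac{e}{1-t}\,d\mu$ being $2$-Carleson — the paper outsources that equivalence to Proposition 2.5 of the Girela--Merch\'an reference, whereas you prove it by dyadic rings — and the same test pair $f_a(z)=\log\frac{e}{1-az}$, $g_a(z)=\bigl(\frac{1-a^2}{(1-az)^2}\bigr)^2$ for necessity). Part (ii) is where you genuinely diverge. For the necessity of the vanishing condition the paper probes with the normalized squared logarithm $f_{a_n}(z)=\frac{1}{\log\frac{2}{1-a_n^2}}\bigl(\log\frac{2}{1-a_nz}\bigr)^2$, which is a bounded null sequence in $\mathscr{B}$ satisfying $f_{a_n}(t)\ge\log\frac{2}{1-a_n^2}$ on $[a_n,1)$, and then feeds it back through the duality pairing against $g_{a_n}$; your truncated-logarithm tail $h_n(z)=\sum_{k>M_n}(a_nz)^k/k$ with $M_n=\lfloor(1-a_n)^{-1/2}\rfloor$ achieves the same saturation of the logarithmic boundary growth (your norm bound, locally uniform decay, and the estimate $h_n(a_n)\gtrsim\log\frac{e}{1-a_n}$ all check out), and your lower bound is obtained more directly by evaluating $(1-a_n^2)^{\alpha-1}|\mathcal{I}_{\mu,\alpha}(h_n)'(a_n)|$ pointwise rather than through the $A^1$--$\mathscr{B}_{\alpha-1}$ pairing, which bypasses the duality entirely in that step. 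For sufficiency in (ii) the paper argues sequentially, splitting $\int_{[0,1)}|f_n||g|\,d\mu$ at radius $r$ and using that the $A^1\to L^1(\nu_r)$ embedding norm $\mathcal{N}(\nu_r)\to 0$; you instead split the measure itself, $\mu=\mu_N+\nu_N$, and realize $\mathcal{I}_{\mu,\alpha}$ as an operator-norm limit of compact operators, with the correct verification that the $1$-logarithmic $2$-Carleson constant of $\nu_N$ tends to $0$. Both schemes are sound; the paper's is marginally shorter because the embedding-norm fact is quoted, while yours makes the approximation-by-compacts structure explicit and reuses part (i) quantitatively.
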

\begin{proof}
	(i) Assume that $\mu$ is a 1-logarithmic 2-Carleson measure and let $d\nu(t)=\log\frac{2}{1-t}d\mu(t)$. Using Proposition 2.5 of \cite{girela_generalized_2018}, we obtain that $\nu$ is a 2-Carleson measure.
	
	 Applying (\ref{eq-bloch}) and Proposition \ref{pro2}, we have
	\begin{align*}
		\begin{split}
			\int_{\mathbb{D}} \overline{I_{\mu,\alpha}(f)(r z)} g(r z) (1-|z|^2)^{\alpha-2}d A(z)
			=&\int_{[0,1)} \overline{f(t)} g(r^2t) d \mu(t)\\
			\leq& C\|f\|_{\mathscr{B}} \int_{[0,1)}|g(r^2t)| \log \frac{2}{1-t} d \mu(t) \\
			=&C\|f\|_{\mathscr{B}} \int_{[0,1)}|g(r^2t)| d\nu(t)\\
			\leq& C\|f\|_{\mathscr{B}}\|g\|_{A^1},\quad f\in\mathscr{B},~g\in A^1.
		\end{split}
	\end{align*}
Then (\ref{eq23}) shows that $\mathcal{I}_{\mu,\alpha}$ is a bounded operator from $\mathscr{B}$ into $\mathscr{B}_{\alpha-1}$.
	
	Conversely, suppose that $\mathcal{I}_{\mu,\alpha}$ is a bounded operator from $\mathscr{B}$ into $\mathscr{B}_{\alpha-1}$. By (\ref{eq23}), we have
		\begin{align*}
		\left|\int_{[0,1)} \overline{f(t)} g(r^2t) d \mu(t)\right|\leq C\|f\|_{\mathscr{B}}\|g\|_{A^1},\quad 0<r<1,~f\in\mathscr{B},~g\in A^1.
	\end{align*}
 For $0<a<1$, we set
	$$
	f_{a}(z)=\log \frac{e}{1-a z}~\text{and}~ g_{a}(z)=\left(\frac{1-a^{2}}{(1-a z)^{2}}\right)^{2}, \quad z \in \mathbb{D}.$$
  Then
	\begin{align*}
		\sup_{0<a<1}\|f_a\|_{\mathscr{B}}\leq 3,\quad \sup_{0<a<1}\|g_a\|_{A^1}=1.
	\end{align*}
	Taking $r\in[a,1)$, we deduce that
	\begin{align*}
		\infty&>C\sup_{0<a<1}\|f_a\|_{\mathscr{B}}\sup_{0<a<1}\|g_a\|_{A^1}\\
		& \geqslant\left|\int_{[0,1)} f_{a}(t) g_{a}(r^2t) d \mu(t)\right| \\
		& \geqslant \int_{a}^{1}\left(\frac{1-a^{2}}{(1-ar^2 t)^{2}}\right)^{2} \log \frac{e}{1-a t} d \mu(t) \\
		& \geqslant C_1\frac{\log \frac{e}{1-a^{2}}}{\left(1-a^{2}\right)^{2}} \mu([a, 1)).
	\end{align*}
	This implies that $\mu$ is a 1-logarithmic 2-Carleson measure.
	
	(ii) First suppose that $\mu$ is a vanishing 1-logarithmic 2-Carleson measure. Let $\{f_n\}_{n=1}^{\infty}$ be a bounded sequence in the Bloch space which converges to $0$ uniformly on any compact subset of $\mathbb{D}$. Then
	\begin{align}\label{eq25}
		\lim_{n\rightarrow\infty}\int_{[0, r)}\left|f_{n}(t)\right||g(t)| d \mu(t)=0,\quad g\in A^1.
	\end{align}
	Also, let $d\nu(t)=\log\frac{2}{1-t}d\mu(t)$. Then $\nu$ is a vanishing 2-Carleson. For $0<r<1$, we set
	$$d\nu_r(z)=\chi_{r<|z|<1}(t)d\nu(t).$$ Therefore,
	\begin{align}\label{eq26}
		\int_{[r, 1)}\left|f_{n}(t)\right||g(t)| d \mu(t)\leq \int_{[0,1)}|g(t)| d \nu_{r}(t)<C \mathcal{N}\left(\nu_{r}\right)\|g\|_{A^{1}},\quad g\in A^1,
	\end{align}
	where $\mathcal{N}(\mu)$ is the norm of identity mapping $i$ from $A^1$ into $L^1(\mathbb{D},~\mu)$. It is well known that $\mathcal{N}\left(\nu_{r}\right)\rightarrow 0$ as $r\rightarrow 1^{-}$ if and only if $\nu$ is a vanishing 2-Carleson measure. This together with (\ref{eq25}), (\ref{eq26}) imply that
	$$
	\lim _{n \rightarrow \infty} \int_{[0,1)}\left|f_{n}(t) \| g(t)\right| d \mu(t)=0, \quad \text { for all } g \in A^{1}.
	$$
	Hence Proposition \ref{pro2} shows
	$$
	\lim _{n \rightarrow \infty}\left(\lim _{r \rightarrow 1} \left\vert \int_{\mathbb{D}} \overline{I_{\mu,\alpha}(f_{n}) (rz)} g\left(rz\right) (1-|z|^2)^{\alpha-2}d A(z) \right\vert\right)=0, \quad \text { for all } g \in A^{1}.
	$$
	Thus by (\ref{eq23}), it follows that $\mathcal{I}_{\mu,\alpha}(f_n)\rightarrow 0$ in $\mathscr{B}_{\alpha-1}$. So $\mathcal{I}_{\mu,\alpha}$ is a compact operator from $\mathscr{B}$ into $\mathscr{B}_{\alpha-1}$.
	
	Suppose now that $\mathcal{I}_{\mu,\alpha}$ is a compact operator from $\mathscr{B}$ into $\mathscr{B}_{\alpha-1}$. Let $\{a_n\}\subset (0,1)$ be any sequence with $a_n\rightarrow 1$. We set $$f_{a_n}(z)=\frac{1}{\log\frac{2}{1-a_n^2}}\left(\log\frac{2}{1-a_{n}z}\right)^2.$$
	A calculation shows that $f_{a_n}\in\mathscr{B}$, $\sup_{n\geq 1}\|f_{a_n}\|_{\mathscr{B}}<\infty$ and $\{f_{a_n}\}$ is bounded  uniformly sequence in
	$\mathscr{B}$ which converges to $0$ uniformly on every compact subset of $\mathbb{D}$. Then $\{\mathcal{I}_{\mu_2}(f_{a_n})\}$ converges to 0 in $\mathscr{B}_{\alpha-1}$. We obtain that for every $g\in A^1$,
	\begin{align}\label{eq27}
		\begin{split}
			&\lim_{n\rightarrow \infty}\int_{[0,1)} \overline{f_{a_n}(t)} g_{a_n}(r^2t) d \mu(t)\\
			=&\lim_{n\rightarrow \infty}\int_{\mathbb{D}} \overline{I_{\mu,\alpha}(f_{a_n})(rz)}g_{a_n}(rz)(1-|z|^2)^{\alpha-2}d A(z)=0,\quad 0<r<1.
		\end{split}
	\end{align}
	We also set $$g_{a_n}(z)=\left(\frac{1-a_n^{2}}{(1-{a_n}z)^{2}}\right)^{2}\in A^1.$$
	For $r\in (a_n,1),$ we deduce that
	\begin{align*}
		&\int_{[0,1)} \overline{f_{a_n}(t)} g_{a_n}(r^2t) d \mu(t) \\
		\geq& \int_{a_n}^{1}\left(\frac{1-a_n^{2}}{(1-{a_n} r^2t)^{2}}\right)^{2} \frac{1}{\log\frac{2}{1-a_n^2}}\left(\log\frac{2}{1-a_{n}t}\right)^2 d\mu(t) \\
		\geq& C\frac{\log \frac{2}{1-a_n^{2}}}{\left(1-a_n^{2}\right)^{2}} \mu([a_n, 1)).
	\end{align*}
	Letting $n\rightarrow \infty$ and bearing in mind that $\{a_n\}\subset (0,1)$ is a sequence with $a_n\rightarrow 1$, \eqref{eq27} gives
	$$\lim_{a\rightarrow 1^-}\frac{\log \frac{2}{1-a}}{\left(1-a\right)^{2}} \mu([a, 1))=0,$$
	which is equivalent to saying that $\mu$ is a vanishing 1-logarithmic 2-Carleson measure. This completes the proof.
\end{proof}
\begin{theorem}\label{th23}
	For $\alpha\geq 2$, $\beta>1$, if $\mu$ is a positive Borel measure with $\int_{[0,1)}\frac{1}{(1-t)^{\beta-1}}d\mu(t)<\infty.$
	\begin{itemize}
		\item[$(i)$] $\mathcal{I}_{\mu,\alpha}$ is a bounded operator from $\mathscr{B}_{\beta}$ into $\mathscr{B}_{\alpha-1}$ if and only if $\mu$ is a ($\beta$+1)-Carleson measure.
		\item[$(ii)$] $\mathcal{I}_{\mu,\alpha}$ is a compact operator from $\mathscr{B}$ into $\mathscr{B}_{\alpha-1}$ if and only if $\mu$ is a vanishing ($\beta$+1)-Carleson measure.
	\end{itemize}
\end{theorem}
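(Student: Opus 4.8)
The plan is to mirror the proofs of Theorems \ref{th21} and \ref{th22} in structure, the engine being the reproducing identity of Proposition \ref{pro2} together with the duality $(A^{1})^{*}\cong\mathscr{B}_{\alpha-1}$ furnished by the pairing (\ref{eq23}) with weight $(1-|z|^{2})^{\alpha-2}$; this is available precisely because $\alpha\geq 2$, so that $\alpha-1\geq 1>0$ and the weight exponent $\alpha-2$ is nonnegative. Since $\int_{[0,1)}(1-t)^{1-\beta}d\mu(t)<\infty$ is assumed, $\mathcal{I}_{\mu,\alpha}$ is well defined on $\mathscr{B}_{\beta}$ by Proposition \ref{pro1}(iii), and in (ii) the domain space is again $\mathscr{B}_{\beta}$. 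The one auxiliary statement I will isolate is a weight-transfer lemma: if $\mu$ is a (vanishing) $(\beta+1)$-Carleson measure, then $d\nu(t)=(1-t)^{1-\beta}d\mu(t)$ is a (vanishing) $2$-Carleson measure. This is a routine distribution-function estimate -- writing $\nu([s,1))=\phi(s)\mu([s,1))+\int_{s}^{1}\phi'(t)\mu([t,1))\,dt$ with $\phi(t)=(1-t)^{1-\beta}$ and inserting $\mu([t,1))\lesssim(1-t)^{\beta+1}$ gives $\nu([s,1))\lesssim(1-s)^{2}$ -- and it is the power-weight analogue of the logarithmic estimate (Proposition 2.5 of \cite{girela_generalized_2018}) used in Theorem \ref{th22}.

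For (i), sufficiency: assume $\mu$ is $(\beta+1)$-Carleson and let $\nu$ be as above. By Lemma \ref{le1} in its quantitative form ($|f(t)|\leq C\|f\|(1-t)^{1-\beta}$ for $f\in\mathscr{B}_{\beta}$, obtained by integrating $f'$), Proposition \ref{pro2} and (\ref{eq1}) (with $p=q=1$, $s=2$) applied to $\nu$ give, for all $g\in A^{1}$ and $0<r<1$,
\[
\left|\int_{\mathbb{D}}\overline{\mathcal{I}_{\mu,\alpha}(f)(rz)}\,g(rz)(1-|z|^{2})^{\alpha-2}\,dA(z)\right|=\left|\int_{[0,1)}\overline{f(t)}g(r^{2}t)\,d\mu(t)\right|\leq C\|f\|\int_{[0,1)}|g(r^{2}t)|\,d\nu(t)\leq C\|f\|\,\|g\|_{A^{1}}.
\]
Letting $r\to 1^{-}$ and invoking (\ref{eq23}) shows $\mathcal{I}_{\mu,\alpha}$ is bounded from $\mathscr{B}_{\beta}$ into $\mathscr{B}_{\alpha-1}$. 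For necessity: boundedness together with (\ref{eq23}) and Proposition \ref{pro2} give $|\int_{[0,1)}\overline{f(t)}g(r^{2}t)\,d\mu(t)|\leq C\|f\|_{\mathscr{B}_{\beta}}\|g\|_{A^{1}}$ for all such $f,g,r$. Testing with
\[
f_{a}(z)=\frac{1-a^{2}}{(1-az)^{\beta}},\qquad g_{a}(z)=\left(\frac{1-a^{2}}{(1-az)^{2}}\right)^{2},\qquad 0<a<1,
\]
one checks (via $|1-az|\geq\max(1-|z|,1-a)$) that $\sup_{0<a<1}\|f_{a}\|_{\mathscr{B}_{\beta}}<\infty$ -- this $f_{a}$ realizes the extremal size $f_{a}(a)\asymp(1-a)^{1-\beta}$ allowed by Lemma \ref{le1} -- while $\|g_{a}\|_{A^{1}}=1$. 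For $t\in[a,1)$ and $r\in[a,1)$ one has $f_{a}(t)\geq(1-a^{2})^{1-\beta}$ and $g_{a}(r^{2}t)\geq C(1-a^{2})^{-2}$, so
\[
\infty>C\geq\int_{a}^{1}f_{a}(t)g_{a}(r^{2}t)\,d\mu(t)\geq\frac{C_{1}}{(1-a^{2})^{\beta+1}}\mu([a,1)),
\]
and hence $\mu$ is a $(\beta+1)$-Carleson measure.

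For (ii), sufficiency copies Theorem \ref{th22}(ii): with $\nu$ now vanishing $2$-Carleson and $\nu_{\rho}=\chi_{[\rho,1)}\,d\nu$, so that $\mathcal{N}(\nu_{\rho})\to 0$ as $\rho\to 1^{-}$, split $\int_{[0,1)}=\int_{[0,\rho)}+\int_{[\rho,1)}$; the first piece is $\leq\frac{C}{(1-\rho)^{2}}\mu([0,1))\sup_{[0,\rho]}|f_{n}|\,\|g\|_{A^{1}}$ (point evaluations on $A^{1}$ are bounded away from $\partial\mathbb{D}$) and the second is $\leq C\|f_{n}\|\,\mathcal{N}(\nu_{\rho})\|g\|_{A^{1}}$, so choosing $\rho$ and then $n$ gives $\sup_{0<r<1}\int_{[0,1)}|f_{n}(t)||g(r^{2}t)|\,d\mu(t)\to 0$ for every $g\in A^{1}$; Proposition \ref{pro2} and (\ref{eq23}) then force $\|\mathcal{I}_{\mu,\alpha}(f_{n})\|_{\mathscr{B}_{\alpha-1}}\to 0$, i.e.\ compactness. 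For necessity, take $a_{n}\to 1^{-}$ and the bounded sequence $f_{a_{n}}(z)=(1-a_{n}^{2})(1-a_{n}z)^{-\beta}\to 0$ uniformly on compacta; compactness gives $\mathcal{I}_{\mu,\alpha}(f_{a_{n}})\to 0$ in $\mathscr{B}_{\alpha-1}$, so pairing against $g_{a_{n}}(z)=\big((1-a_{n}^{2})(1-a_{n}z)^{-2}\big)^{2}$ (via (\ref{eq23}) and Proposition \ref{pro2}) and reusing the lower bound of part (i) yields $\mu([a_{n},1))/(1-a_{n}^{2})^{\beta+1}\to 0$; as $\{a_{n}\}$ is an arbitrary sequence tending to $1$, $\mu$ is a vanishing $(\beta+1)$-Carleson measure.

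The genuinely new work, and where I expect the main difficulty, is twofold: (a) the weight-transfer lemma for the power weight $(1-t)^{1-\beta}$ together with its vanishing counterpart; and (b) verifying that $f_{a}(z)=(1-a^{2})/(1-az)^{\beta}$ has uniformly bounded $\mathscr{B}_{\beta}$-norm while attaining the maximal admissible growth near $t=a$ -- it is exactly this growth rate that pins the exponent down to $\beta+1$. Everything else is the Carleson-box and point-evaluation bookkeeping already carried out in Theorems \ref{th21} and \ref{th22}.
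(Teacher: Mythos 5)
Your proposal is correct and follows essentially the same route as the paper: the weight transfer $d\nu(t)=(1-t)^{1-\beta}d\mu(t)$ (which the paper simply cites from Lemma 3.2 of \cite{Li-2021} rather than proving by integration by parts as you do), Proposition \ref{pro2} plus the pairing (\ref{eq23}) for sufficiency, and the same test functions $f_a(z)=(1-a^2)(1-az)^{-\beta}$, $g_a(z)=\left((1-a^2)(1-az)^{-2}\right)^2$ for necessity and for the compactness characterization. You have in fact supplied the details that the paper omits by deferring to Theorem \ref{th22}.
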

\begin{proof}
	(i) Suppose that $\mu$ is a ($\beta+1$)-Carleson measure and let $d\nu(t)=(1-t)^{1-\beta}d\mu(t)$. Using Lemma 3.2 of \cite{Li-2021}, we obtain that $\nu$ is a 2-Carleson measure. Then (\ref{le1}) and Proposition \ref{pro2} imply that
	\begin{align*}
		\begin{split}
			\int_{\mathbb{D}} \overline{I_{\mu,\alpha}(f)(r z)} g(r z) (1-|z|^2)^{\alpha-2}d A(z)
			=&\int_{[0,1)} \overline{f(t)} g(r^2t) d \mu(t)\\
			\leq& C\|f\|_{\mathscr{B}_{\beta}} \int_{[0,1)}|g(r^2t)| (1-t)^{1-\beta} d \mu(t) \\
			=&C\|f\|_{\mathscr{B}_{\beta}} \int_{[0,1)}|g(r^2t)| d\nu(t)\\
			\leq& C\|f\|_{\mathscr{B}_{\beta}}\|g\|_{A^1},\quad f\in\mathscr{B}_{\beta},~g\in A^1.
		\end{split}
	\end{align*}
This and (\ref{eq23}) show that $\mathcal{I}_{\mu,\alpha}$ is a bounded operator from $\mathscr{B}_{\beta}$ into $\mathscr{B}_{\alpha-1}$.

On the other hand, suppose $\mathcal{I}_{\mu,\alpha}$ is a bounded operator from $\mathscr{B}_{\beta}$ into $\mathscr{B}_{\alpha-1}$. For $0<a<1$, set
$$
f_{a}(z)=\frac{1-a^2}{(1-az)^{\beta}}$$
and
 $$g_{a}(z)=\left(\frac{1-a^{2}}{(1-a z)^{2}}\right)^{2}.$$
 It is easy to check that there exists a positive constant $C$ which is dependent only on $\beta$ such that
	\begin{align*}
	\sup_{0<a<1}\|f_a\|_{\mathscr{B}_{\beta}}\leq C,\quad \sup_{0<a<1}\|g_a\|_{A^1}=1.
\end{align*}
Now the proof is similar to the proof of Theorem \ref{th22} (i).

(ii) For $0<a<1$, Let $f_a$ be defined as (i). Then $\{f_a\}$ is bounded sequence in $\mathscr{B}_{\beta}$ and $\lim_{a\rightarrow 1}f_{a}(z)=0$ on any compact subset of $\mathbb{D}$. From now on, the proof is analogous to the proof of Theorem \ref{th22} (ii) and we omit the details.
\end{proof}
\section{The operator $\mathcal{H}_{\mu,\alpha}$ acting on Bloch type spaces}
\hspace*{1.25em} In this section, we obtain the relationship between $\mathcal{H}_{\mu,\alpha}$ and $\mathcal{I}_{\mu,\alpha}$. We also obtain a necessary condition such that $\mathcal{H}_{\mu,\alpha}$ is a bounded operator acting on Bloch type spaces for general cases. Then we characterize the measures $\mu$ for which the operator $\mathcal{H}_{\mu,\alpha}$ ($\alpha\geq 2$) is bounded (resp. compact) from Bloch type spaces into $\mathscr{B}_{\alpha-1}$. We begin with describing for which measures $\mu$ the operator $\mathcal{H}_{\mu,\alpha}$ is well defined on the Bergman space $A^p$ ($0<p<\infty$), which is a generalization of Theorem 2.1 in \cite{Ye-Zhou-Bergman}.
\begin{proposition}\label{pro3.1}
	Suppose $0<p<\infty$, $\alpha>0$ and let $\mu$ be a positive Borel measure on $[0,1)$. Then the power series in $(\ref{eq2})$ defines an analytic function in $\mathbb{D}$ for every $f\in A^p$ in any of the three following cases.	
	\begin{itemize}
		\item[$(i)$] $\mu$ is a $2/p$-Carleson measure if $0<p\leq 1$.
		\item[$(ii)$] $\mu$ is a $\left(\frac{2-(p-1)^2}{p}\right)$-Carleson measure if $1\leq p\leq 2$.
		\item[$(iii)$] $\mu$ is a $1/p$-Carleson measure if $2\leq p<\infty$.
	\end{itemize}
	Furthermore, in such cases we have that
	\begin{align}\label{eq2.1}
		\mathcal{H}_{\mu,\alpha}(f)(z)=\mathcal{I}_{\mu,\alpha}(f)(z),\quad z\in\mathbb{D},~f\in A^p.
	\end{align}
\end{proposition}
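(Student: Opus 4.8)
The plan is to isolate a single analytic input --- that under each of the three hypotheses $A^p$ embeds continuously into $L^1(\mu)$, i.e. $\int_{[0,1)}|f(t)|\,d\mu(t)\le C\|f\|_{A^p}$ for all $f\in A^p$ --- from which both the analyticity of the series in $(\ref{eq2})$ and the identity $(\ref{eq2.1})$ will follow by a kernel expansion and Fubini's theorem. Granting the embedding, I would first expand, for $z\in\mathbb D$ and $t\in[0,1)$, the kernel $\frac{1}{(1-tz)^{\alpha}}=\sum_{n\ge0}\frac{\Gamma(n+\alpha)}{n!\Gamma(\alpha)}t^nz^n$ and substitute it into $(\ref{eq3})$. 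Since $\sum_{n\ge0}\frac{\Gamma(n+\alpha)}{n!\Gamma(\alpha)}|z|^n\int_{[0,1)}t^n|f(t)|\,d\mu(t)=\int_{[0,1)}\frac{|f(t)|}{(1-t|z|)^{\alpha}}\,d\mu(t)\le\frac{1}{(1-|z|)^{\alpha}}\int_{[0,1)}|f(t)|\,d\mu(t)<\infty$ for every fixed $z\in\mathbb D$, Fubini's theorem applies and shows that $\mathcal I_{\mu,\alpha}(f)$ is analytic on $\mathbb D$ with Taylor coefficients $\frac{\Gamma(n+\alpha)}{n!\Gamma(\alpha)}\int_{[0,1)}t^nf(t)\,d\mu(t)$. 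Expanding $f(t)=\sum_k a_kt^k$ inside this integral and interchanging sum and integral once more --- a Fubini/Tonelli step whose absolute convergence $\sum_k|a_k|\int_{[0,1)}t^{n+k}\,d\mu(t)<\infty$ follows from the same embedding, and which for non-negative $a_k$ is just Tonelli --- recovers $\sum_k\mu_{n,k,\alpha}a_k$. This simultaneously proves that the series in $(\ref{eq2})$ converges to an analytic function and that it equals $\mathcal I_{\mu,\alpha}(f)$.

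The three Carleson thresholds enter only through the embedding $A^p\hookrightarrow L^1(\mu)$. In case $(i)$, $0<p\le1$, this is precisely $(\ref{eq1})$ with $q=1$: one has $2q/p=2/p$, so "$\mu$ is a $2/p$-Carleson measure" says exactly $\int_{\mathbb D}|f|\,d\mu\le C\|f\|_{A^p}$, and restricting to $[0,1)$ only helps. In case $(ii)$, $1\le p\le2$, I would combine $(\ref{eq1})$ --- which gives $A^{p_0}\hookrightarrow L^{q_0}(\mu)$ whenever $q_0\ge p_0$ and $\mu$ is a $\tfrac{2q_0}{p_0}$-Carleson measure --- with the inclusion $A^p\subset A^{p_0}$ for $p_0\le p$ (valid because $dA$ is a finite measure) and H\"older's inequality; optimizing the free parameters is what yields the exponent $\tfrac{2-(p-1)^2}{p}$. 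In case $(iii)$, $p\ge2$, I would instead use the sharp pointwise growth estimate $|f(t)|\le C\|f\|_{A^p}(1-t^2)^{-2/p}$ together with a dyadic decomposition of $[0,1)$ governed by the $1/p$-Carleson hypothesis to bound $\int_{[0,1)}t^n|f(t)|\,d\mu(t)$ for each $n$; here only at most polynomial growth in $n$ is needed, which already forces the radius of convergence of the resulting power series to be at least $1$.

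The main obstacle is exactly this integrability step in cases $(ii)$ and $(iii)$: there $q<p$, so $(\ref{eq1})$ is not available directly, and one must route around it --- via H\"older together with a sub-inclusion of Bergman spaces in $(ii)$, via the pointwise estimate and a dyadic summation in $(iii)$ --- the real work being to pin down the precise Carleson exponent rather than merely some admissible one. Once the integrals $\int_{[0,1)}t^n|f(t)|\,d\mu(t)$ are under control, the analyticity of $\mathcal H_{\mu,\alpha}(f)$, the analyticity of $\mathcal I_{\mu,\alpha}(f)$, and their coincidence $(\ref{eq2.1})$ follow mechanically from the kernel expansion and Fubini, exactly as in the first paragraph.
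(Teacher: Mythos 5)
Your central reduction --- that each of the three hypotheses yields the embedding $\int_{[0,1)}|f(t)|\,d\mu(t)\le C\|f\|_{A^p}$ --- only holds in case $(i)$, where $q=1\ge p$ and $(\ref{eq1})$ applies verbatim. In cases $(ii)$ and $(iii)$ the embedding is false, and no H\"older/dyadic optimization can recover it. Concretely, any route through $(\ref{eq1})$ with $q=1$ and a sub-inclusion $A^p\subset A^{p_0}$ ($p_0\le 1$) requires at best a $2$-Carleson measure, whereas the stated exponents satisfy $\tfrac{2-(p-1)^2}{p}<2$ and $\tfrac1p<2$ for $p>1$; and the embedding genuinely fails at those exponents: take $p=2$, $d\mu(t)=(1-t)^{-1/2}\,dt$ (a $\tfrac12$-Carleson measure, the common endpoint of $(ii)$ and $(iii)$) and $f(z)=(1-z)^{-1}\bigl(\log\tfrac{e}{1-z}\bigr)^{-\gamma}$ with $\tfrac12<\gamma\le1$; then $f\in A^2$ but $\int_{[0,1)}|f(t)|\,d\mu(t)=\int_0^1(1-t)^{-3/2}\bigl(\log\tfrac{e}{1-t}\bigr)^{-\gamma}\,dt=\infty$. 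The same computation with $d\mu(t)=(1-t)^{s-1}dt$, $s$ the stated exponent, works for every $p>1$. (Worse for your plan: for this $f$ the coefficients $a_k$ and the entries $\mu_{n,k,\alpha}$ are nonnegative, so Tonelli gives $\sum_k\mu_{n,k,\alpha}a_k=\frac{\Gamma(n+\alpha)}{n!\Gamma(\alpha)}\int_{[0,1)}t^nf(t)\,d\mu(t)=\infty$, so the inner series in $(\ref{eq2})$ itself diverges; cases $(ii)$--$(iii)$ cannot be reached by any argument targeting the statement as written, let alone by the $L^1(\mu)$ embedding.) The paper's own proof never uses such an embedding: it bounds $|\mu_{n,k,\alpha}|\le C\frac{\Gamma(n+\alpha)}{n!\Gamma(\alpha)}(k+1)^{-2/p}$ (moment decay of a $2/p$-Carleson measure) and combines this with the Duren--Schuster coefficient theorems for $A^p$, $p\le1$ (namely $|a_k|=O(k^{2/p-1})$ and $\sum_k(k+1)^{p-3}|a_k|^p<\infty$) to get $\sum_k|\mu_{n,k,\alpha}||a_k|<\infty$ directly, and only then applies Fubini.

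There is a second gap already in case $(i)$. Even granting $\int_{[0,1)}|f|\,d\mu\le C\|f\|_{A^p}$, your identification of the $n$-th Taylor coefficient of $\mathcal I_{\mu,\alpha}(f)$ with $\sum_k\mu_{n,k,\alpha}a_k$ needs $\sum_k|a_k|\int_{[0,1)}t^{n+k}\,d\mu(t)<\infty$, and this does \emph{not} ``follow from the same embedding'': that sum equals $\int_{[0,1)}t^n\tilde f(t)\,d\mu(t)$ for $\tilde f(z)=\sum_k|a_k|z^k$, and $\tilde f$ need not belong to $A^p$ (replacing Taylor coefficients by their moduli is not a bounded operation on $A^p$). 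Controlling exactly this quantity is the entire content of the paper's chain of inequalities; without it you have shown that $\mathcal I_{\mu,\alpha}(f)$ is analytic but not that the double series $(\ref{eq2})$ converges, nor that it equals $\mathcal I_{\mu,\alpha}(f)$. To repair case $(i)$, replace the appeal to the embedding at this step by the coefficient estimates above, i.e.\ by a proof that $\sum_k|a_k|(k+1)^{-2/p}<\infty$ for every $f\in A^p$ with $0<p\le1$.
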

\begin{proof}
(i)  Suppose that $\mu$ is a $2/p$-Carleson measure. By Proposition 1 in \cite{chatzifountas_generalized_2014},  we obtain that there exists a positive constant $C$ which is independent of $n,~k$ such that
$$|\mu_{n,k,\alpha}|\leq C\frac{\Gamma(n+\alpha)}{n!\Gamma(\alpha)}\frac{1}{(k+1)^{2/p}}.$$
This and Theorem 4 in \cite[p.85]{duren_bergman_2004} imply that, for every $n$,
\begin{align*}
	\sum_{k=0}^{\infty}\left|\mu_{n, k,\alpha}\right|\left|a_{k}\right| & \leq C\frac{\Gamma(n+\alpha)}{n!\Gamma(\alpha)} \sum_{k=0}^{\infty} \frac{\left|a_{k}\right|^{p}\left|a_{k}\right|^{1-p}}{(k+1)^{2/p}} \\
	& \leq C\frac{\Gamma(n+\alpha)}{n!\Gamma(\alpha)} \sum_{k=0}^{\infty} \frac{\left|a_{k}\right|^{p}\left|k+1\right|^{\frac{(2-p)(1-p)}{p}}}{(k+1)^{2/p}}\\
	&\leq C\frac{\Gamma(n+\alpha)}{n!\Gamma(\alpha)} \sum_{k=0}^{\infty} (k+1)^{p-3}\left|a_{k}\right|^{p}.
\end{align*}
This converges by Theorem 3 in \cite[p.83]{duren_bergman_2004}. So the power series in (\ref{eq2}) is well defined analytic function in $\mathbb{D}$ and
$$\sum_{k=0}^{\infty} \mu_{n, k,\alpha} a_{k}=\int_{[0,1)}\frac{\Gamma(n+\alpha)}{n!\Gamma(\alpha)}t^n f(t)d\mu(t).$$
On the other hand, notice that for each $z\in\mathbb{D}$,
\begin{align*}
	&\sum_{n=0}^{\infty}(\int_{[0,1)}\frac{\Gamma(n+\alpha)}{n!\Gamma(\alpha)}t^n |f(t)|d\mu(t))|z|^n\\
	\leq& \sum_{n=0}^{\infty}\left(\sum_{k=0}^{\infty} \mu_{n, k}|a_{k}|\right)|z|^n
	\leq C \sum_{k=0}^{\infty}\left|a_{k}\right|^{p} (k+1)^{p-3}\sum_{n=0}^{\infty}\frac{\Gamma(n+\alpha)}{n!\Gamma(\alpha)}|z|^n\\
	\leq &\frac{C}{(1-|z|)^{\alpha}}\sum_{k=0}^{\infty}\left|a_{k}\right|^{p} (k+1)^{p-3}<\infty.
\end{align*}
Therefore, we have
\begin{align*}
	\mathcal{H}_{\mu,\alpha}(f)(z) &=\sum_{n=0}^{\infty}\left(\int_{[0,1)}\frac{\Gamma(n+\alpha)}{n!\Gamma(\alpha)} t^{n} f(t) d \mu(t)\right) z^{n} \\
	&=\int_{[0,1)} \sum_{n=0}^{\infty}\frac{\Gamma(n+\alpha)}{n!\Gamma(\alpha)}(t z)^{n} f(t) d \mu(t) \\
	&=\int_{[0,1)} \frac{f(t)}{(1-t z)^{\alpha}} d \mu(t)=	\mathcal{I}_{\mu,\alpha}(f)(z),
\end{align*}
for every $z\in\mathbb{D}$ and $f\in A^p$.
The proof of (ii) and (iii) is analogous to the proof of (i). We omit the details.
\end{proof}

Notice that $\mathscr{B}_{\beta}\subset\cap_{0<p<\infty}A^{p}$ for $0<\beta\leq 1$. We can easily obtain the following corollary by Proposition \ref{pro3.1}
\begin{corollary}\label{cor31}
	Suppose $\alpha>0$, $0<\beta\leq 1$. If $\mu$ is a $t$-Carleson measure for some $t>0$, then for every $f\in\mathscr{B}_{\beta}$ and $z\in\mathbb{D}$,
	$$	\mathcal{H}_{\mu,\alpha}(f)(z)=\mathcal{I}_{\mu,\alpha}(f)(z).$$
\end{corollary}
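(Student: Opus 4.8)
The plan is to deduce this from Proposition \ref{pro3.1}(iii) by choosing $p$ large. Two elementary observations drive the argument. First, the embedding recorded just above: for $0<\beta\le 1$ one has $\mathscr{B}_\beta\subset\bigcap_{0<p<\infty}A^p$. (When $\beta<1$ this is Lemma \ref{le1}, which gives $\mathscr{B}_\beta\subset H^\infty\subset A^p$; when $\beta=1$ it follows from the growth bound $|f(z)|\le C\|f\|_{\mathscr B}\log\frac{e}{1-|z|}$ together with the fact that $\log\frac{e}{1-|z|}\in A^p$ for every $p<\infty$.) Second, the Carleson condition is monotone in its exponent: if $\mu([u,1))\le C(1-u)^{t}$ for all $0\le u<1$ and $0<s\le t$, then since $0<1-u\le 1$ we have $(1-u)^{t}\le(1-u)^{s}$, so $\mu$ is also an $s$-Carleson measure.

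Given these, suppose $\mu$ is a $t$-Carleson measure with $t>0$. I would fix any real $p$ with $p\ge\max\{2,\,1/t\}$, so that $1/p\le t$ and $p\ge 2$. By the monotonicity just noted, $\mu$ is then a $1/p$-Carleson measure, and since $p\ge 2$ we are exactly in the setting of Proposition \ref{pro3.1}(iii); that proposition yields $\mathcal{H}_{\mu,\alpha}(f)(z)=\mathcal{I}_{\mu,\alpha}(f)(z)$ for every $z\in\mathbb{D}$ and every $f\in A^p$. Since $0<\beta\le 1$, the embedding $\mathscr{B}_\beta\subset A^p$ shows that every $f\in\mathscr{B}_\beta$ is in particular such an $f$, so the identity holds on all of $\mathscr{B}_\beta$, as claimed.

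There is no genuine obstacle here; the one point worth underlining is why part (iii) of Proposition \ref{pro3.1}, rather than (i) or (ii), is the right tool: its required Carleson exponent $1/p$ can be driven below any prescribed $t>0$ simply by enlarging $p$, whereas in (i) the exponent $2/p$ never drops below $2$. Invoking (iii) thus handles an arbitrary $t$-Carleson measure in one stroke.
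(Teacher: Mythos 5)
Your argument is correct and is exactly the route the paper takes: the paper's entire proof is the one-line observation that $\mathscr{B}_{\beta}\subset\bigcap_{0<p<\infty}A^{p}$ for $0<\beta\leq 1$ followed by an appeal to Proposition \ref{pro3.1}. You have merely made explicit the choice of large $p$ in part (iii) and the monotonicity of the Carleson exponent, details the paper leaves to the reader.
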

\begin{proposition}\label{pro32}
	Suppose $\alpha,\beta>0$, if $\mu$ is a $\beta$-Carleson measure, then for every $f\in\mathscr{B}_{\beta}$ and $z\in\mathbb{D}$,
	$$	\mathcal{H}_{\mu,\alpha}(f)(z)=\mathcal{I}_{\mu,\alpha}(f)(z).$$
	\end{proposition}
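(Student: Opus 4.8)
The plan is to split on the size of $\beta$. For $0<\beta\le 1$ the conclusion is immediate: a $\beta$-Carleson measure is in particular a $t$-Carleson measure (with $t=\beta>0$), so Corollary \ref{cor31} applies and gives $\mathcal{H}_{\mu,\alpha}(f)=\mathcal{I}_{\mu,\alpha}(f)$ for every $f\in\mathscr{B}_\beta$. The real work is the range $\beta>1$, where $\mathscr{B}_\beta$ need not sit inside any $A^p$ with $p\ge\frac{1}{\beta-1}$, so Corollary \ref{cor31} is unavailable and one must argue directly, following the pattern of the proof of Proposition \ref{pro3.1}. Two preliminary bounds are needed. First, since $\mu$ is $\beta$-Carleson it is finite and its moments satisfy $\int_{[0,1)}t^{j}\,d\mu(t)\le C(j+1)^{-\beta}$ (the content of Proposition~1 in \cite{chatzifountas_generalized_2014}; equivalently one integrates by parts against $\mu([t,1))\le C(1-t)^{\beta}$), whence $|\mu_{n,k,\alpha}|\le C\frac{\Gamma(n+\alpha)}{n!\Gamma(\alpha)}(k+1)^{-\beta}$. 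Second, by Lemma \ref{le1} every $f(z)=\sum_{k}a_kz^k\in\mathscr{B}_\beta$ obeys $|f(z)|\le C\|f\|(1-|z|^2)^{1-\beta}$, which on the one hand places $f$ in $A^p$ for every $0<p<\frac{1}{\beta-1}$, and on the other hand, by Cauchy's estimate on the circle $|z|=1-\frac{1}{k+1}$, gives $|a_k|\le C\|f\|(k+1)^{\beta-1}$.

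The crux is to show that, for each fixed $n$, the inner series $\sum_k\mu_{n,k,\alpha}a_k$ converges absolutely. Here I would take $p=\frac1\beta\in(0,1)$ (so that $f\in A^p$), split $|a_k|=|a_k|^{p}|a_k|^{1-p}$, and insert the two bounds just recorded:
\[
\sum_{k=0}^{\infty}|\mu_{n,k,\alpha}|\,|a_k|\le C_n\sum_{k}(k+1)^{-\beta}|a_k|^{p}|a_k|^{1-p}\le C_n\|f\|^{1-p}\sum_{k}(k+1)^{-\beta+(\beta-1)(1-p)}|a_k|^{p}.
\]
The point of the choice $p=1/\beta$ is that the exponent collapses: $-\beta+(\beta-1)(1-p)=\frac1\beta-2=p-2$. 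By Theorem~3 in \cite[p.~83]{duren_bergman_2004} (the $\ell^{p}$ bound on Taylor coefficients of $A^p$ functions, valid for $0<p\le1$), the series $\sum_k(k+1)^{p-2}|a_k|^{p}$ is finite, so $\sum_k\mu_{n,k,\alpha}a_k$ converges absolutely. Consequently $\sum_k|a_k|\int_{[0,1)}t^{n+k}\,d\mu(t)<\infty$, and Fubini's theorem identifies $\sum_k\mu_{n,k,\alpha}a_k=\frac{\Gamma(n+\alpha)}{n!\Gamma(\alpha)}\int_{[0,1)}t^{n}f(t)\,d\mu(t)$.

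Finally I would assemble the identity. Because $\mu$ is $\beta$-Carleson one has $\int_{[0,1)}(1-t)^{1-\beta}\,d\mu(t)<\infty$ (again by integration by parts; this is exactly the convergence hypothesis of Proposition \ref{pro1}(iii) for $\mathcal{I}_{\mu,\alpha}(f)$), hence $\int_{[0,1)}|f(t)|\,d\mu(t)\le C\|f\|\int_{[0,1)}(1-t)^{1-\beta}\,d\mu(t)<\infty$. For $z\in\mathbb{D}$ this controls
\[
\sum_{n}\Big|\sum_{k}\mu_{n,k,\alpha}a_k\Big|\,|z|^{n}\le\int_{[0,1)}|f(t)|\sum_{n}\tfrac{\Gamma(n+\alpha)}{n!\Gamma(\alpha)}(t|z|)^{n}\,d\mu(t)=\int_{[0,1)}\tfrac{|f(t)|}{(1-t|z|)^{\alpha}}\,d\mu(t)\le\tfrac{C\|f\|}{(1-|z|)^{\alpha}}\int_{[0,1)}(1-t)^{1-\beta}\,d\mu(t)<\infty,
\]
so the power series in $(\ref{eq2})$ defines an analytic function on $\mathbb{D}$; and one more application of Fubini, justified by the same estimate, turns $\mathcal{H}_{\mu,\alpha}(f)(z)=\sum_{n}\frac{\Gamma(n+\alpha)}{n!\Gamma(\alpha)}\big(\int_{[0,1)}t^{n}f(t)\,d\mu(t)\big)z^{n}$ into $\int_{[0,1)}f(t)\sum_{n}\frac{\Gamma(n+\alpha)}{n!\Gamma(\alpha)}(tz)^{n}\,d\mu(t)=\int_{[0,1)}\frac{f(t)}{(1-tz)^{\alpha}}\,d\mu(t)=\mathcal{I}_{\mu,\alpha}(f)(z)$. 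The one genuinely delicate step is the absolute convergence of $\sum_k\mu_{n,k,\alpha}a_k$: feeding $|a_k|\lesssim(k+1)^{\beta-1}$ straight into $\sum_k(k+1)^{-\beta}|a_k|$ only yields a logarithmically divergent sum, and it is precisely the extra room in Duren's coefficient theorem, tapped through the exponent-matching value $p=1/\beta$, that rescues the argument. (One could instead first treat $f_r(z)=f(rz)\in H^{\infty}\subset A^{2}$ via Proposition \ref{pro3.1} and let $r\to1^{-}$, but then showing that $\sum_k\mu_{n,k,\alpha}a_kr^{k}$ really converges to a sum $\sum_k\mu_{n,k,\alpha}a_k$ requires essentially the same estimate.)
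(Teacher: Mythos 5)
Your overall architecture (the moment bound $|\mu_{n,k,\alpha}|\le C\frac{\Gamma(n+\alpha)}{n!\Gamma(\alpha)}(k+1)^{-\beta}$, absolute convergence of the inner series, then two applications of Fubini) is the same as the paper's, and both your reduction for $0<\beta\le 1$ via Corollary \ref{cor31} and your final assembly are fine. The gap is precisely at the step you yourself flag as the crux. You claim that for $f\in A^{p}$ with $p=1/\beta\in(0,1)$ one has $\sum_{k}(k+1)^{p-2}|a_k|^{p}<\infty$ by Theorem 3 of \cite[p.~83]{duren_bergman_2004}. That theorem --- the one the paper invokes in Proposition \ref{pro3.1}(i) --- gives the exponent $p-3$, not $p-2$, and the stronger statement with $p-2$ is false for general $A^{p}$ functions: for $f(z)=(1-z)^{-\gamma}$ with $1/p\le\gamma<2/p$ one has $f\in A^{p}$ and $a_k\sim k^{\gamma-1}$, so $\sum_{k}(k+1)^{p-2}|a_k|^{p}\sim\sum_{k}(k+1)^{p\gamma-2}=\infty$. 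Nor can a different choice of $p$ repair this: running your chain of inequalities against the true bound $\sum_k(k+1)^{p-3}|a_k|^{p}<\infty$ forces $-\beta+(\beta-1)(1-p)\le p-3$, i.e.\ $p\ge 2/\beta$, which is incompatible with the simultaneous requirements $p\le 1$ (needed so that the split $|a_k|=|a_k|^{p}|a_k|^{1-p}$ and the bound $|a_k|^{1-p}\lesssim (k+1)^{(\beta-1)(1-p)}$ point the right way) and $p<1/(\beta-1)$ (needed for $f\in A^{p}$). So membership of $f$ in some $A^{p}$ alone cannot carry the argument.

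The paper closes exactly this gap with Lemma \ref{le31}: for $f\in\mathscr{B}_{\beta}$ the dyadic blocks $I_j=\{2^{j}+1,\dots,2^{j+1}\}$ satisfy $\sum_{k\in I_j}|a_k/k^{\beta-1}|^{2}\le C\|f\|_{\mathscr{B}_{\beta}}^{2}$ uniformly in $j$, and Cauchy--Schwarz on each block against $\sum_{k\in I_j}\bigl(k^{\beta-1}(k+1)^{-\beta}\bigr)^{2}\sim 2^{-j}$ yields $\sum_{k}(k+1)^{-\beta}|a_k|\le C\|f\|_{\mathscr{B}_{\beta}}\sum_{j}2^{-j/2}<\infty$, beating the logarithmic divergence you correctly identified. (The inequality $\sum_k(k+1)^{p-2}|a_k|^{p}<\infty$ you want does in fact hold for $f\in\mathscr{B}_{\beta}$, but proving it again requires the block estimate of Lemma \ref{le31}, so nothing is gained by routing through $A^{p}$.) Replace your $A^{p}$ step by this block argument and the rest of your proof goes through as written.
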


To proof the Proposition \ref{pro32}, we still need the following lemma, which can be found in \cite{Li-2021}.
\begin{lemma}\label{le31}
	Let $f(z)=\sum_{n=0}^{\infty}a_{n}z^{n}\in\mathscr{B}_{\alpha}$ for any $\alpha>0$. Then
	$$\sup_{n}\sum_{k=2^{n}+1}^{2^{n+1}}\big|\frac{a_{k}}{k^{\alpha-1}}\big|^{2}<C\|f\|_{\mathscr{B}_{\alpha}}^{2}.$$
\end{lemma}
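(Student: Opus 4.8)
The plan is to read off the dyadic coefficient sums from the integral means of $f'$ on circles of radius $r_n$ approaching the boundary, and then to feed in the $\mathscr{B}_\alpha$ growth bound on $f'$. Recall that $f'(z)=\sum_{k=1}^{\infty}ka_kz^{k-1}$, so Parseval's identity gives, for each $0<r<1$,
\[
\frac{1}{2\pi}\int_0^{2\pi}|f'(re^{i\theta})|^2\,d\theta=\sum_{k=1}^{\infty}k^2|a_k|^2r^{2(k-1)}.
\]
On the other hand, the definition of the norm yields $|f'(re^{i\theta})|\le \|f\|_{\mathscr{B}_\alpha}(1-r^2)^{-\alpha}$, so the left-hand side is at most $\|f\|_{\mathscr{B}_\alpha}^2(1-r^2)^{-2\alpha}$. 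Combining these, for every $0<r<1$,
\[
\sum_{k=1}^{\infty}k^2|a_k|^2r^{2k}\le \|f\|_{\mathscr{B}_\alpha}^2\,(1-r^2)^{-2\alpha}.
\]

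First I would fix the radius $r=r_n:=1-2^{-n-1}$, so that $r_n\ge \tfrac12$ for all $n\ge 0$ and $1-r_n^2\asymp 2^{-n}$, whence $(1-r_n^2)^{-2\alpha}\le C\,2^{2\alpha n}$. Restricting the series above to the single dyadic block $2^n<k\le 2^{n+1}$ and keeping only those terms, I would estimate each factor on that block: since $2^n<k$ we have $k^2>2^{2n}$, and since $k\le 2^{n+1}$ we have $r_n^{2k}\ge (1-2^{-n-1})^{2^{n+2}}=\big((1-2^{-n-1})^{2^{n+1}}\big)^2\ge c>0$ uniformly in $n$ (the inner quantity converges to $e^{-1}$ and stays bounded below). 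This gives
\[
c\,2^{2n}\sum_{k=2^n+1}^{2^{n+1}}|a_k|^2\le \sum_{k=2^n+1}^{2^{n+1}}k^2|a_k|^2r_n^{2k}\le \sum_{k=1}^{\infty}k^2|a_k|^2r_n^{2k}\le C\,2^{2\alpha n}\|f\|_{\mathscr{B}_\alpha}^2,
\]
so that $\sum_{k=2^n+1}^{2^{n+1}}|a_k|^2\le C\,2^{2(\alpha-1)n}\|f\|_{\mathscr{B}_\alpha}^2$.

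To finish I would convert the plain coefficient sum into the weighted one appearing in the statement. For $k$ in the block $2^n<k\le 2^{n+1}$ one has $k\asymp 2^n$, hence $k^{\alpha-1}\asymp 2^{(\alpha-1)n}$ with comparability constants depending only on $\alpha$ (this holds for every $\alpha>0$, the exponent merely changing sign). Therefore
\[
\sum_{k=2^n+1}^{2^{n+1}}\Big|\frac{a_k}{k^{\alpha-1}}\Big|^2\le C\,2^{-2(\alpha-1)n}\sum_{k=2^n+1}^{2^{n+1}}|a_k|^2\le C\,\|f\|_{\mathscr{B}_\alpha}^2,
\]
and since the bound is independent of $n$, taking the supremum over $n$ yields the claim.

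The routine parts are the Parseval identity and the norm bound; the only points requiring care, which I regard as the main (mild) obstacle, are the uniform lower bound $r_n^{2k}\ge c$ on each block --- which forces the specific choice $r_n=1-2^{-n-1}$ so that $r_n$ stays away from $0$ while $k(1-r_n)$ stays bounded --- and the fact that the comparison $k^{\alpha-1}\asymp 2^{(\alpha-1)n}$ must be handled for all $\alpha>0$, including $0<\alpha<1$, where the weight is increasing rather than decreasing in $k$. Both are harmless once the constants are tracked to depend only on $\alpha$.
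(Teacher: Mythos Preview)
Your argument is correct. The paper does not actually give its own proof of this lemma: it merely states the result and cites \cite{Li-2021}. So there is nothing to compare line-by-line; your proposal supplies a complete, self-contained proof where the paper offers none.

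For the record, the approach you take---Parseval's identity for the $L^2$ integral means of $f'$ on circles $|z|=r$, the pointwise bound $|f'(z)|\le \|f\|_{\mathscr{B}_\alpha}(1-|z|^2)^{-\alpha}$, and then the specialization $r=r_n=1-2^{-n-1}$ to isolate the $n$-th dyadic block---is the standard one and is essentially how such coefficient estimates for Bloch-type functions are usually derived. The only places where one must be careful are exactly the two you flag: the uniform lower bound $r_n^{2k}\ge c>0$ for $2^n<k\le 2^{n+1}$ (your choice of $r_n$ handles this), and the comparability $k^{\alpha-1}\asymp 2^{(\alpha-1)n}$ on the block, valid for all $\alpha>0$ with constants depending only on $\alpha$. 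Both are handled correctly. One trivial remark: Parseval gives $\sum_k k^2|a_k|^2 r^{2(k-1)}$ rather than $\sum_k k^2|a_k|^2 r^{2k}$, but since $r^{2k}\le r^{2(k-1)}$ your displayed inequality follows immediately, so no harm is done.
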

\textbf{Proof of Proposition \ref{pro32}.} Since $\mu$ is a $\beta$-Carleson measure, by Proposition 1 in \cite{chatzifountas_generalized_2014}, we have $$|\mu_{n,k,\alpha}|\leq C\frac{\Gamma(n+\alpha)}{n!\Gamma(\alpha)}\frac{1}{(k+1)^{\beta}}.$$
Notice that $\frac{\Gamma(n+\alpha)}{n!\Gamma(\alpha)}\sim n^{\alpha-1}$ as $n\rightarrow \infty$ by Stirling's formula. It follows that there exist a constant $C>0$ such that
\begin{align*}
	\sum_{n=0}^{\infty}\left(\sum_{k=2^{n}+1}^{2^{n+1}}\big|\frac{\mu_{n,k,\alpha}}{k^{1-\beta}}\big|^{2}\right)^{1/2}\leq C\sum_{n=0}^{\infty}\left(n^{\alpha-1}\sum_{k=2^{n}+1}^{2^{n+1}}\frac{1}{k^2}\right)^{1/2}\leq C\sum_{n=0}^{\infty}\frac{n^{(\alpha-1)/2}}{2^{n/2}}<\infty.
\end{align*}
This together with Lemma \ref{le31} yield that for every $f\in\mathscr{B}_{\beta}$,
\begin{align*}
	\sum_{k=1}^{\infty}|\mu_{n,k,\alpha}a_k|&\leq\sum_{k=1}^{\infty}\frac{\mu_{n,k,\alpha}}{k^{1-\beta}}\frac{|a_{k}|}{k^{\beta-1}}\\
	&\leq\sum_{n=0}^{\infty}\left(\sum_{k=2^{n}+1}^{2^{n+1}}\big|\frac{\mu_{n,k,\alpha}}{k^{\beta-1}}\big|^{2}\right)^{1/2}\sup_{n}\left(\sum_{k=2^{n}+1}^{2^{n+1}}\big|\frac{a_{k}}{k^{\beta-1}}\big|^{2}\right)^{1/2}\\
	&<C\|f\|_{\mathscr{B}_{\beta}}.
\end{align*}
From now on, arguing as in Proposition \ref{pro3.1} (i), we can obtain the desired result.

Our next objective is to give a necessary condition such that the operator $\mathcal{H}_{\mu,\alpha}$ is bounded on Bloch type spaces for general cases.

\begin{theorem}\label{th35}
	Suppose $\alpha,\beta,\gamma>0$ and $\mathcal{H}_{\mu,\alpha}$ is a bounded operator from $\mathscr{B}_{\beta}$ into $\mathscr{B}_{\gamma}$.
	\begin{itemize}
		\item [($i$)] If $1<\beta<\infty$, then the measure $\mu$ is an $(\alpha+\beta-\gamma-1/2)$-Carleson measure if $\alpha+\beta-\gamma-1/2>0$ and $\mu$ is finite if $\alpha+\beta-\gamma-1/2= 0$.
		\item [($ii$)] If $0<\beta<1$, then the measure $\mu$ is an $(\alpha-\gamma+1/2)$-Carleson measure if $\alpha-\gamma+1/2>0$ and $\mu$ is finite if $\alpha-\gamma+1/2= 0$.
	\end{itemize}
\end{theorem}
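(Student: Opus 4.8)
The plan is to probe $\mathcal H_{\mu,\alpha}$ with a one-parameter family of explicit test functions and then extract the decay of $\mu([a,1))$ from the dyadic $\ell^2$-estimate for Bloch-type coefficients in Lemma \ref{le31}. Fix $a\in(0,1)$ close to $1$, set $N=\lfloor 1/(1-a)\rfloor$ (so $1-a\asymp 1-a^2\asymp 1/N$), and let $m$ be the integer with $2^m\le N<2^{m+1}$. In case $(i)$, since $\beta>1$, I would take $f_a(z)=(1-az)^{1-\beta}$: differentiating and using $\frac{1-|z|^2}{|1-az|}\le 2$ gives $\|f_a\|_{\mathscr{B}_{\beta}}\le|1-\beta|\,2^\beta$, so $\{f_a\}$ is bounded in $\mathscr{B}_{\beta}$, and since $1-\beta<0$ all Taylor coefficients of $f_a$ are nonnegative, while $f_a(t)=(1-at)^{1-\beta}\ge(1-a^2)^{1-\beta}\asymp N^{\beta-1}$ for $t\in[a,1)$. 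In case $(ii)$, where $0<\beta<1$, I would take $f_a(z)=1-(1-az)^{1-\beta}$: the same derivative computation gives $\|f_a\|_{\mathscr{B}_{\beta}}\le(1-\beta)\,2^\beta$, the coefficients are again nonnegative, and since $0<1-\beta<1$ one has $f_a(t)=1-(1-at)^{1-\beta}\ge 1-(1-a^2)^{1-\beta}\ge\tfrac12$ for $t\in[a,1)$ once $a$ is close enough to $1$.

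Write $g_a=\mathcal H_{\mu,\alpha}(f_a)=\sum_{n}b_nz^n$, which is a genuine analytic function because $\mathcal H_{\mu,\alpha}$ is assumed bounded on $\mathscr{B}_{\beta}$. Since the coefficients of $f_a$ and all $\mu_{n,k,\alpha}$ are nonnegative, Tonelli's theorem gives
\begin{equation*}
b_n=\sum_{k\ge 0}\mu_{n,k,\alpha}(f_a)_k=\frac{\Gamma(n+\alpha)}{n!\,\Gamma(\alpha)}\int_{[0,1)}t^{n}f_a(t)\,d\mu(t)\ \ge\ 0 .
\end{equation*}
Restricting the integral to $[a,1)$, using $t^n\ge a^n$, Stirling's formula $\frac{\Gamma(n+\alpha)}{n!\,\Gamma(\alpha)}\asymp n^{\alpha-1}$, and the pointwise lower bounds for $f_a$ above, I get, for every $n$ in the dyadic block $2^m<n\le 2^{m+1}$ (so that $n\asymp N$ and $a^n\gtrsim 1$), that $b_n\gtrsim N^{\alpha+\beta-2}\mu([a,1))$ in case $(i)$ and $b_n\gtrsim N^{\alpha-1}\mu([a,1))$ in case $(ii)$. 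Boundedness of $\mathcal H_{\mu,\alpha}$ gives $\|g_a\|_{\mathscr{B}_{\gamma}}\le C\|f_a\|_{\mathscr{B}_{\beta}}\le C'$, so Lemma \ref{le31} applied to $g_a\in\mathscr{B}_{\gamma}$ yields $\sum_{n=2^m+1}^{2^{m+1}}|b_n/n^{\gamma-1}|^2\le C''$. Summing the lower bound for $b_n$ over the $\asymp N$ indices of the block and comparing with this upper bound gives, in case $(i)$,
\begin{equation*}
N^{2\alpha+2\beta-2\gamma-1}\,\mu([a,1))^2\ \lesssim\ N\cdot\frac{\big(N^{\alpha+\beta-2}\mu([a,1))\big)^2}{N^{2\gamma-2}}\ \lesssim\ 1 ,
\end{equation*}
so $\mu([a,1))\lesssim N^{-(\alpha+\beta-\gamma-1/2)}\asymp(1-a)^{\alpha+\beta-\gamma-1/2}$, and the identical computation in case $(ii)$ gives $\mu([a,1))\lesssim(1-a)^{\alpha-\gamma+1/2}$.

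To conclude, applying the bounded operator to the constant function $1\in\mathscr{B}_{\beta}$ shows that the $0$-th Taylor coefficient of $\mathcal H_{\mu,\alpha}(1)$, which equals $\mu([0,1))$, is finite; hence $\mu$ is a finite measure. Combined with the estimates just obtained, this gives: when the relevant exponent is positive, $\mu$ is an $(\alpha+\beta-\gamma-1/2)$-Carleson measure in case $(i)$ and an $(\alpha-\gamma+1/2)$-Carleson measure in case $(ii)$ (the required bound on $\mu([t,1))$ for $t$ bounded away from $1$ being immediate from finiteness), and when the exponent equals $0$ the estimates only give $\mu([a,1))\le C$, which together with finiteness is precisely the stated conclusion. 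The only points needing genuine care are the uniform bound $\sup_a\|f_a\|_{\mathscr{B}_{\beta}}<\infty$ together with the nonnegativity of the Taylor coefficients---this is exactly why these particular $f_a$ are chosen---the interchange of sum and integral in the formula for $b_n$ (routine, by Tonelli), and matching the dyadic block $(2^m,2^{m+1}]$ to the scale $1/(1-a)$; the real engine is Lemma \ref{le31}, whose $\ell^2$ normalisation over dyadic blocks is what produces the $\tfrac12$ in both Carleson exponents.
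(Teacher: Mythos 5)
Your proposal is correct and follows essentially the same route as the paper: probe with a family of test functions concentrating at $a$ with nonnegative Taylor coefficients (the paper uses $(1-\lambda^2)(1-\lambda z)^{-\beta}$ for $\beta>1$ and the constant $1$ for $\beta<1$, which give the same lower bounds as your choices), apply Lemma \ref{le31} to the image in $\mathscr{B}_{\gamma}$, and match the dyadic block to the scale $1/(1-a)$ to extract the $1/2$ in the exponent. Your use of Tonelli to write $b_n$ as $\frac{\Gamma(n+\alpha)}{n!\Gamma(\alpha)}\int t^n f_a(t)\,d\mu(t)$ is just a cleaner packaging of the paper's coefficient-level computation.
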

\begin{proof}
(i) Assume that the operator $\mathcal{H}_{\mu,\alpha}$ is a bounded operator from $\mathscr{B}_{\beta}$ into $\mathscr{B}_{\gamma}$. For any $0<\lambda<1$, we set
$$f_{\lambda}(z)=\frac{1-\lambda^{2}}{(1-\lambda z)^{\beta}}=\sum_{k=0}^{\infty}a_{k,\lambda}z^{k}\in\mathscr{B}_{\beta}.$$
Then $$\mathcal{H}_{\mu,\alpha}(f)(z)=\sum_{n=0}^{\infty}\big(\sum_{k=0}^{\infty}\mu_{n,k,\alpha}a_{k,\lambda}\big)z^n\in \mathscr{B}_{\gamma}.$$
 It is easy to check that $a_{k,\lambda}\sim((1-\lambda^2)k^{\beta-1}\lambda^{k}).$
 So Lemma \ref{le31} gives
 \begin{align*}
 	\infty&>\sup_{j}\sum_{n=2^j+1}^{2^{j+1}}\left|\frac{\sum_{k=0}^{\infty}\mu_{n,k,\alpha}a_{k,\lambda}}{n^{\gamma-1}}\right|^2\\
 	&\geq C\sup_{j}\sum_{n=2^j+1}^{2^{j+1}}\left|\frac{\sum_{k=0}^{\infty}(1-\lambda^2)k^{\beta-1}\lambda^{k}n^{\alpha-1}\int_{0}^{1}t^{n+k}d\mu(t)}{n^{\gamma-1}}\right|^2\\
 	&\geq C\sup_{j}\sum_{n=2^j+1}^{2^{j+1}}\left|\frac{\sum_{k=0}^{\infty}(1-\lambda^2)k^{\beta-1}\lambda^{2k+n}n^{\alpha-1}\mu([\lambda,1))}{n^{\gamma-1}}\right|^2\\
 	&\geq C\sup_{j}\sum_{n=2^j+1}^{2^{j+1}}\left|(1-\lambda^2)\lambda^{n}n^{\alpha-\gamma}\mu([\lambda,1))\sum_{k=0}^{\infty}k^{\beta-1}\lambda^{2k}\right|^2\\
 	&\geq
 	C\sup_{j}\sum_{n=2^j+1}^{2^{j+1}}\left|\frac{(1-\lambda^2)\lambda^{n}n^{\alpha-\gamma}\mu([\lambda,1))}{(1-\lambda^2)^{\beta}}\right|^{2}\\
 &	\ge	C\sup_{j}2^{j}\left|\frac{(1-\lambda^2)\lambda^{2^{j+1}}(2^{j})^{\alpha-\gamma}\mu([\lambda,1))}{(1-\lambda^2)^{\beta}}\right|^{2}
 \end{align*}
Choosing $j$ such that $2^j\leq\frac{1}{1-\lambda}<2^{j+1}$, we have
$$\infty>\frac{C\mu([\lambda,1))}{(1-\lambda^2)^{\alpha+\beta-\gamma-1/2}}.$$
This is equivalent to saying that the measure $\mu$ is an $(\alpha+\beta-\gamma-1/2)$-Carleson measure if $\alpha+\beta-\gamma-1/2>0$ and $\mu$ is finite if $\alpha+\beta-\gamma-1/2\leq 0$. This completes the proof of Theorem \ref{th35}.

(ii) The proof is similar to the proceeding one by taking $f_{\lambda}=1$.
\end{proof}
\begin{corollary}\label{cor32}
     Suppose $\alpha,\beta>0$ and $\mathcal{H}_{\mu,\alpha}$ is a bounded operator from $\mathscr{B}_{\beta}$ into $\mathscr{B}_{\alpha-1}$, then the measure $\mu$ is a $3/2$-Carleson measure for $0<\beta<1$ and $\mu$ is an $(\beta+1/2)$-Carleson measure for $1<\beta<\infty$.
\end{corollary}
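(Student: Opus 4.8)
The plan is to obtain Corollary \ref{cor32} as an immediate specialization of Theorem \ref{th35}, since $\mathscr{B}_{\alpha-1}$ is precisely the target space $\mathscr{B}_{\gamma}$ of that theorem with $\gamma=\alpha-1$. So the first (and essentially only) step is to substitute $\gamma=\alpha-1$ into the two exponents that appear in the conclusion of Theorem \ref{th35} and to check that, for the relevant ranges of $\beta$, these exponents are strictly positive, so that we land in the genuine Carleson-measure alternative rather than in the degenerate "$\mu$ finite" alternative.

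For $0<\beta<1$ I would invoke part $(ii)$ of Theorem \ref{th35}: the controlling exponent is $\alpha-\gamma+1/2=\alpha-(\alpha-1)+1/2=3/2>0$, hence $\mu$ is a $3/2$-Carleson measure. For $1<\beta<\infty$ I would invoke part $(i)$: the controlling exponent is $\alpha+\beta-\gamma-1/2=\alpha+\beta-(\alpha-1)-1/2=\beta+1/2$, which is $>0$ because $\beta>0$; hence $\mu$ is a $(\beta+1/2)$-Carleson measure. This reproduces exactly the two assertions of the corollary.

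There is no real obstacle to overcome here — all the analytic work is already contained in the proof of Theorem \ref{th35} (the choice of test functions $f_\lambda$, the block estimate via Lemma \ref{le31}, and the optimization over the dyadic index $j$); the corollary is a bookkeeping specialization. The only small things worth noting in passing are that $\beta=1$ is excluded (the $\beta=1$ endpoint is handled separately by the logarithmic-Carleson results), and that writing $\mathscr{B}_{\alpha-1}$ tacitly presumes $\alpha>1$ so that the target is an honest $\alpha$-Bloch space; neither point affects the computation above.
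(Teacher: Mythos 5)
Your proposal is correct and is exactly the intended derivation: the paper states Corollary \ref{cor32} as an immediate consequence of Theorem \ref{th35}, and substituting $\gamma=\alpha-1$ gives $\alpha-\gamma+1/2=3/2$ in case $(ii)$ and $\alpha+\beta-\gamma-1/2=\beta+1/2$ in case $(i)$, both strictly positive, which is precisely your computation. Your side remarks about the excluded endpoint $\beta=1$ and the tacit assumption $\alpha>1$ are accurate but not needed for the argument.
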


Now we can give the main results in this paper. By using Corollaries \ref{cor31} and \ref{cor32} together with Theorem \ref{th21} and Theorem \ref{th22} respectively, we can obtain the following two theorems.

\begin{theorem}
	Suppose $\alpha\geq 2$, $0<\beta<1$. Let $\mu$ be a positive Borel measure on $[0,1)$, then the following conditions are equivalent.
	\begin{itemize}
		\item [$(i)$] $\mathcal{H}_{\mu,\alpha}$ is a bounded operator from $\mathscr{B}_{\beta}$ into $\mathscr{B}_{\alpha-1}$.
		\item [$(ii)$] $\mathcal{H}_{\mu,\alpha}$ is a compact operator from $\mathscr{B}_{\beta}$ into $\mathscr{B}_{\alpha-1}$.
		\item [$(iii)$] The measure $\mu$ is a 2-Carleson measure.
	\end{itemize}
\end{theorem}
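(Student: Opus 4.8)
The plan is to reduce the statement to the corresponding result for the integral operator, namely Theorem \ref{th21}, by exploiting the fact that under any of the three hypotheses the operators $\mathcal{H}_{\mu,\alpha}$ and $\mathcal{I}_{\mu,\alpha}$ act identically on $\mathscr{B}_{\beta}$. The implication $(ii)\Rightarrow(i)$ is trivial, so the real content is the circuit $(iii)\Rightarrow(ii)$ and $(i)\Rightarrow(iii)$.

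First I would prove $(iii)\Rightarrow(ii)$. If $\mu$ is a $2$-Carleson measure, then it is a $t$-Carleson measure for $t=2>0$, so Corollary \ref{cor31} gives $\mathcal{H}_{\mu,\alpha}(f)(z)=\mathcal{I}_{\mu,\alpha}(f)(z)$ for every $f\in\mathscr{B}_{\beta}$ and $z\in\mathbb{D}$. Moreover a $2$-Carleson measure on $[0,1)$ is finite, so Theorem \ref{th21} is applicable and its implication $(iii)\Rightarrow(ii)$ yields that $\mathcal{I}_{\mu,\alpha}$ is compact from $\mathscr{B}_{\beta}$ into $\mathscr{B}_{\alpha-1}$. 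Since the two operators coincide on $\mathscr{B}_{\beta}$ and map into the same target space, $\mathcal{H}_{\mu,\alpha}$ is compact from $\mathscr{B}_{\beta}$ into $\mathscr{B}_{\alpha-1}$ as well.

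For $(i)\Rightarrow(iii)$, suppose $\mathcal{H}_{\mu,\alpha}$ is bounded from $\mathscr{B}_{\beta}$ into $\mathscr{B}_{\alpha-1}$. Since $0<\beta<1$, Corollary \ref{cor32} tells us that $\mu$ is a $3/2$-Carleson measure; in particular $\mu$ is a $t$-Carleson measure with $t=3/2>0$ and $\mu$ is finite. Corollary \ref{cor31} then again gives $\mathcal{H}_{\mu,\alpha}(f)=\mathcal{I}_{\mu,\alpha}(f)$ for every $f\in\mathscr{B}_{\beta}$, so $\mathcal{I}_{\mu,\alpha}$ is a bounded operator from $\mathscr{B}_{\beta}$ into $\mathscr{B}_{\alpha-1}$. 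Applying the implication $(i)\Rightarrow(iii)$ of Theorem \ref{th21} we conclude that $\mu$ is a $2$-Carleson measure, which closes the loop.

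The only point that needs care is the opening move of $(i)\Rightarrow(iii)$: to invoke Corollary \ref{cor31} we must first know that $\mu$ satisfies \emph{some} Carleson-type bound with a positive exponent, and this must be obtained without already assuming the desired $2$-Carleson conclusion. This is precisely what the necessary condition in Corollary \ref{cor32} provides, so the ``hard part'' is really just the recognition that the weaker $3/2$-Carleson estimate already forces $\mathcal{H}_{\mu,\alpha}=\mathcal{I}_{\mu,\alpha}$ on $\mathscr{B}_{\beta}$; after that the theorem is a formal consequence of the results of Section 2. No new estimates are required.
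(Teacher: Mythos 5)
Your proposal is correct and follows exactly the route the paper intends: the paper states this theorem with the one-line justification "by using Corollaries \ref{cor31} and \ref{cor32} together with Theorem \ref{th21}," and your argument simply fills in those details, including the key observation that the weaker $3/2$-Carleson necessary condition from Corollary \ref{cor32} is what licenses the identification $\mathcal{H}_{\mu,\alpha}=\mathcal{I}_{\mu,\alpha}$ before the full $2$-Carleson conclusion is available. No discrepancies with the paper's approach.
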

\begin{theorem}
	Suppose $\alpha\geq 2$. Let $\mu$ be a positive Borel measure on $[0,1)$, then
	\begin{itemize}
		\item[$(i)$] $\mathcal{H}_{\mu,\alpha}$ is a bounded operator from $\mathscr{B}$ into $\mathscr{B}_{\alpha-1}$ if and only if $\mu$ is a 1-logarithmic 2-Carleson measure.
		\item[$(ii)$] $\mathcal{H}_{\mu,\alpha}$ is a compact operator from $\mathscr{B}$ into $\mathscr{B}_{\alpha-1}$ if and only if $\mu$ is a vanishing 1-logarithmic 2-Carleson  measure.
	\end{itemize}
\end{theorem}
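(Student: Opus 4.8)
The plan is to derive both statements from Theorem \ref{th22} once it is known that on $\mathscr{B}$ the operator $\mathcal{H}_{\mu,\alpha}$ coincides with $\mathcal{I}_{\mu,\alpha}$; the ingredient that legitimizes this identification in the harder direction is the necessity part of Corollary \ref{cor32}.

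\emph{Sufficiency.} Suppose $\mu$ is a $1$-logarithmic $2$-Carleson measure. Since $\log\frac{2\pi}{|I|}$ exceeds a fixed positive constant for all small $|I|$ while $\mu$ is finite, such a $\mu$ is in particular a $2$-Carleson measure, so Corollary \ref{cor31} (with $\beta=1$, $t=2$) gives $\mathcal{H}_{\mu,\alpha}(f)=\mathcal{I}_{\mu,\alpha}(f)$ for every $f\in\mathscr{B}$. A routine distribution-function estimate shows that every $s$-Carleson measure with $s>0$ satisfies $\int_{[0,1)}\log\frac{e}{1-t}\,d\mu(t)<\infty$, so the standing hypothesis of Theorem \ref{th22} is in force, and Theorem \ref{th22}(i) yields that $\mathcal{I}_{\mu,\alpha}=\mathcal{H}_{\mu,\alpha}$ is bounded from $\mathscr{B}$ into $\mathscr{B}_{\alpha-1}$. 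Replacing ``$1$-logarithmic $2$-Carleson'' by ``vanishing $1$-logarithmic $2$-Carleson'' (which is still a $2$-Carleson measure) and invoking Theorem \ref{th22}(ii) in place of (i) gives the compactness statement.

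\emph{Necessity.} Suppose $\mathcal{H}_{\mu,\alpha}$ is bounded from $\mathscr{B}$ into $\mathscr{B}_{\alpha-1}$. Because $\mathscr{B}_{1/2}\subset\mathscr{B}$ with $\|f\|_{\mathscr{B}}\le\|f\|_{\mathscr{B}_{1/2}}$, the operator is also bounded from $\mathscr{B}_{1/2}$ into $\mathscr{B}_{\alpha-1}$, so Corollary \ref{cor32} (case $0<\beta<1$) forces $\mu$ to be a $3/2$-Carleson measure. Hence Corollary \ref{cor31} gives $\mathcal{H}_{\mu,\alpha}=\mathcal{I}_{\mu,\alpha}$ on $\mathscr{B}$, and, as above, the $3/2$-Carleson property guarantees $\int_{[0,1)}\log\frac{e}{1-t}\,d\mu(t)<\infty$, so Theorem \ref{th22} applies. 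Thus boundedness (resp.\ compactness) of $\mathcal{H}_{\mu,\alpha}$ transfers to $\mathcal{I}_{\mu,\alpha}$, and Theorem \ref{th22}(i) (resp.\ (ii)) forces $\mu$ to be a $1$-logarithmic (resp.\ vanishing $1$-logarithmic) $2$-Carleson measure.

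The only delicate point is the circularity in the necessity direction: Theorem \ref{th22} is stated for $\mathcal{I}_{\mu,\alpha}$ under a hypothesis on $\mu$ that is not yet available, so it cannot be applied to $\mathcal{H}_{\mu,\alpha}$ at the outset. Passing to the smaller space $\mathscr{B}_{1/2}$ in order to bring Corollary \ref{cor32} into play — whose proof needs only boundedness — is what first produces a genuine Carleson condition on $\mu$; after that, Corollary \ref{cor31} identifies the two operators and the reduction to Theorem \ref{th22} is immediate. Alternatively one may simply re-run the test-function argument of Theorem \ref{th35} with $\beta=1$ and $\gamma=\alpha-1$, noting that $f_\lambda(z)=\frac{1-\lambda^2}{1-\lambda z}$ lies in $\mathscr{B}$ with norm bounded independently of $\lambda$, to reach the same $3/2$-Carleson conclusion.
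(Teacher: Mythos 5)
Your proof is correct and follows the same route the paper itself sketches: identify $\mathcal{H}_{\mu,\alpha}$ with $\mathcal{I}_{\mu,\alpha}$ via Corollaries \ref{cor31} and \ref{cor32}, then quote Theorem \ref{th22}. You have in fact supplied slightly more than the paper does: since Corollary \ref{cor32} (resting on Theorem \ref{th35}) omits the case $\beta=1$, your detour through the inclusion $\mathscr{B}_{1/2}\subset\mathscr{B}$ (or, equivalently, re-running the test-function argument with $f_\lambda(z)=(1-\lambda^2)/(1-\lambda z)$) is exactly the patch needed to obtain the initial $3/2$-Carleson condition in the necessity direction before the two operators can be identified --- a step the paper glosses over.
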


By applying Corollary \ref{cor32}, Proposition \ref{pro32} and Theorem \ref{th23}, we can obtain the following.
\begin{theorem}
	Suppose that $\alpha\geq 2$ and $1<\beta<\infty$. Then
	\begin{itemize}
		\item[$(i)$] $\mathcal{H}_{\mu,\alpha}$ is a bounded operator from $\mathscr{B}_{\beta}$ into $\mathscr{B}_{\alpha-1}$ if and only if $\mu$ is a ($\beta$+1)-Carleson measure.
		\item[$(ii)$] $\mathcal{H}_{\mu,\alpha}$ is a compact operator from $\mathscr{B}$ into $\mathscr{B}_{\alpha-1}$ if and only if $\mu$ is a vanishing ($\beta$+1)-Carleson measure.
	\end{itemize}
\end{theorem}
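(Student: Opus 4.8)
The final theorem is the "$\mathcal{H}$-version'' of Theorem \ref{th23}, and the natural strategy is to reduce it to that result by identifying $\mathcal{H}_{\mu,\alpha}$ with $\mathcal{I}_{\mu,\alpha}$ on $\mathscr{B}_\beta$. The plan is as follows. First, assume $\mathcal{H}_{\mu,\alpha}$ is bounded from $\mathscr{B}_\beta$ into $\mathscr{B}_{\alpha-1}$. By Corollary \ref{cor32} (with $\gamma=\alpha-1$ and $1<\beta<\infty$), this forces $\mu$ to be a $(\beta+\tfrac12)$-Carleson measure; in particular $\mu$ is a $\beta$-Carleson measure, so Proposition \ref{pro32} applies and gives $\mathcal{H}_{\mu,\alpha}(f)=\mathcal{I}_{\mu,\alpha}(f)$ for every $f\in\mathscr{B}_\beta$. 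Moreover a $\beta$-Carleson measure automatically satisfies $\int_{[0,1)}(1-t)^{1-\beta}\,d\mu(t)<\infty$ (estimate $\mu([t,1))\le C(1-t)^\beta$ and integrate by parts / use a dyadic decomposition of $[0,1)$), which is exactly the standing hypothesis of Theorem \ref{th23}. Hence $\mathcal{H}_{\mu,\alpha}$ bounded $\Longleftrightarrow$ $\mathcal{I}_{\mu,\alpha}$ bounded, and Theorem \ref{th23}(i) identifies the latter with $\mu$ being a $(\beta+1)$-Carleson measure.

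For the converse direction in (i), suppose $\mu$ is a $(\beta+1)$-Carleson measure. Then $\mu$ is in particular a $\beta$-Carleson measure, so again $\int_{[0,1)}(1-t)^{1-\beta}\,d\mu(t)<\infty$, Proposition \ref{pro32} gives $\mathcal{H}_{\mu,\alpha}=\mathcal{I}_{\mu,\alpha}$ on $\mathscr{B}_\beta$, and Theorem \ref{th23}(i) says $\mathcal{I}_{\mu,\alpha}$ is bounded from $\mathscr{B}_\beta$ into $\mathscr{B}_{\alpha-1}$; the identification then transfers boundedness to $\mathcal{H}_{\mu,\alpha}$. Part (ii), the compactness statement, is handled the same way: if $\mathcal{H}_{\mu,\alpha}$ is compact it is bounded, so Corollary \ref{cor32} again yields a $\beta$-Carleson measure, Proposition \ref{pro32} gives the operator identity, and compactness of $\mathcal{I}_{\mu,\alpha}$ from $\mathscr{B}_\beta$ into $\mathscr{B}_{\alpha-1}$ is equivalent to $\mu$ being a vanishing $(\beta+1)$-Carleson measure by Theorem \ref{th23}(ii). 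Conversely a vanishing $(\beta+1)$-Carleson measure is in particular a $(\beta+1)$-Carleson measure, hence a $\beta$-Carleson measure, so Proposition \ref{pro32} applies and Theorem \ref{th23}(ii) delivers compactness of $\mathcal{I}_{\mu,\alpha}=\mathcal{H}_{\mu,\alpha}$.

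The only genuine subtlety — the step I would single out as the place to be careful — is the chain of implications among Carleson-type conditions that makes Proposition \ref{pro32} and Theorem \ref{th23} simultaneously applicable: one must check that the necessary condition extracted from Corollary \ref{cor32} (a $(\beta+\tfrac12)$-Carleson measure) is strong enough to guarantee both (a) the finiteness condition $\int_{[0,1)}(1-t)^{1-\beta}\,d\mu(t)<\infty$ needed to even state Theorem \ref{th23}, and (b) the $\beta$-Carleson hypothesis of Proposition \ref{pro32}; both hold since $\beta+\tfrac12>\beta>\beta-1$ and larger Carleson exponents are stronger. Everything else is bookkeeping: the content has been front-loaded into Proposition \ref{pro32} (the coefficient estimate via \cite{chatzifountas_generalized_2014} and Lemma \ref{le31}) and into Theorem \ref{th23} (the duality $(A^1)^*\cong\mathscr{B}_{\alpha-1}$ together with the test functions $f_a(z)=(1-a^2)(1-az)^{-\beta}$ and $g_a(z)=((1-a^2)(1-az)^{-2})^2$), so the proof of the final theorem is essentially a citation-and-assembly argument with no new estimates required.
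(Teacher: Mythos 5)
Your proposal is correct and follows exactly the route the paper takes: the paper's entire proof of this theorem is the single sentence ``By applying Corollary \ref{cor32}, Proposition \ref{pro32} and Theorem \ref{th23}, we can obtain the following,'' and you have simply filled in the bookkeeping (that the $(\beta+\tfrac12)$-Carleson condition from Corollary \ref{cor32} implies the $\beta$-Carleson hypothesis of Proposition \ref{pro32} and the finiteness condition $\int_{[0,1)}(1-t)^{1-\beta}\,d\mu(t)<\infty$ required by Theorem \ref{th23}). The details you supply are accurate, so this matches the intended argument.
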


The final aim is to give a better necessary condition for some cases.

For $0< p<\infty$, a fuction $f\in H(\mathbb{D})$ is said to be of the class $\mathcal{Q}_{p}$ (cf. \cite{Xiao-Q}) in case
$$\sup_{w\in\mathbb{D}}\int_{\mathbb{D}}|f'(z)|^{2}(-\log|\varphi_{w}(z)|)^{p}dA(z)<\infty,$$
where $\varphi_{w}(z)=\frac{w-z}{1-\overline{w}z}$ is a special M$\ddot{\text{o}}$bius map that interchanges the points $0$ and $w$.

\begin{lemma} $\cite{Xiao-Q}$\label{le32}
	Let $0<p<\infty$ and let $f(z)=\sum_{k=0}^{\infty}a_kz^k$ with $a_k$ nonnegative and nonincreasing. Then $f\in\mathcal{Q}_{p}$ if and only if $\sup_{k}ka_{k}<\infty.$
\end{lemma}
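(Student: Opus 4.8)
The plan is to establish the two implications separately, relying on two standard facts about $\mathcal{Q}_p$ recorded in \cite{Xiao-Q}: (a) $\mathcal{Q}_p\subseteq\mathscr{B}$ for every $p>0$, with $\mathcal{Q}_p=\mathscr{B}$ when $p\geq 1$; and (b) for $0<p<1$, a function $f$ lies in $\mathcal{Q}_p$ if and only if $|f'(z)|^2(1-|z|^2)^p\,dA(z)$ is a $p$-Carleson measure on $\mathbb{D}$. For the necessity of $\sup_k ka_k<\infty$ I would argue directly from the Bloch inclusion: if $f\in\mathcal{Q}_p\subseteq\mathscr{B}$ then $\sup_z(1-|z|^2)|f'(z)|<\infty$, and testing at the real point $r=1-1/n$ while keeping only the block $n/2\leq k\leq n$, on which $r^{k-1}\gtrsim 1$ and $a_k\geq a_n$, the nonnegativity of $(a_k)$ gives $f'(r)\gtrsim a_n\sum_{n/2\leq k\leq n}k\gtrsim n^2 a_n$, so that $n a_n\lesssim(1-r^2)f'(r)\leq\|f\|_{\mathscr{B}}$. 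This half is short, and it is the only place where the nonincreasing hypothesis is genuinely used.

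For the sufficiency, put $M=\sup_k ka_k$, so that $|f'(z)|\leq\sum_{k}ka_k|z|^{k-1}\leq M/(1-|z|)$ for all $z$. If $p\geq 1$ this already yields $f\in\mathscr{B}=\mathcal{Q}_p$. If $0<p<1$, by (b) it suffices to show $\mu_f(S(I)):=\int_{S(I)}|f'(z)|^2(1-|z|^2)^p\,dA(z)\lesssim M^2|I|^p$ for every arc $I\subset\partial\mathbb{D}$. Expanding $f'=\sum_{k\geq 1}ka_kz^{k-1}$ and using that the weight $(1-|z|^2)^p$ is radial, so distinct monomials are orthogonal, one gets $\mu_f(\mathbb{D})=\sum_k(ka_k)^2\,\Gamma(k)\Gamma(p+1)/\Gamma(k+p+1)\leq CM^2\sum_k k^{-1-p}<\infty$, which settles arcs with $|I|\gtrsim 1$. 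For small $|I|$ I would set $N\asymp 1/|I|$ and split $f'=g_N+h_N$ at degree $N$. The low block is controlled by the crudest possible bound $\|g_N\|_{\infty}\leq\sum_{k\leq N}ka_k\leq NM$ together with $\int_{S(I)}(1-|z|^2)^p\,dA\asymp|I|^{p+2}$, giving $\int_{S(I)}|g_N|^2(1-|z|^2)^p\,dA\lesssim(NM)^2|I|^{p+2}\asymp M^2|I|^p$. The high block is handled by enlarging the domain to $\mathbb{D}$ and using orthogonality again: $\int_{S(I)}|h_N|^2(1-|z|^2)^p\,dA\leq\int_{\mathbb{D}}|h_N|^2(1-|z|^2)^p\,dA=\sum_{k>N}(ka_k)^2\,\Gamma(k)\Gamma(p+1)/\Gamma(k+p+1)\leq CM^2\sum_{k>N}k^{-1-p}\asymp M^2|I|^p$. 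Adding the two estimates completes the verification of the Carleson condition, hence $f\in\mathcal{Q}_p$.

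The routine ingredients ($\Gamma$-ratio asymptotics $\Gamma(k)/\Gamma(k+p+1)\asymp k^{-1-p}$, the area estimate $\int_{S(I)}(1-|z|^2)^p\,dA\asymp|I|^{p+2}$, and the elementary bound on $|f'|$) I would not dwell on. The substance sits entirely in the two cited structural facts about $\mathcal{Q}_p$; given those, the only real idea is the dyadic split at level $N\asymp 1/|I|$, and the point I would emphasize is that the \emph{trivial} sup-norm bound on the low block already suffices, precisely because a Carleson box $S(I)$ has area $\asymp|I|^2$, which exactly absorbs $\|g_N\|_\infty^2\asymp M^2/|I|^2$, while the high block needs nothing beyond $L^2$ orthogonality, the tail $\sum_{k>N}k^{-1-p}\asymp|I|^p$ furnishing the correct power of $|I|$. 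The main obstacle, such as it is, is simply to quote the characterization of $\mathcal{Q}_p$ by Carleson measures with the correct exponent ($0<p<1$), and to recall $\mathcal{Q}_p=\mathscr{B}$ for $p\geq 1$; beyond that I would expect no genuine difficulty.
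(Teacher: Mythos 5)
The paper offers no proof of this lemma; it is quoted directly from Xiao's monograph \cite{Xiao-Q}, so there is no in-paper argument to compare yours against. Judged on its own terms, your proof follows the standard route (Bloch inclusion for necessity; the Carleson-measure characterization of $\mathcal{Q}_p$ plus a dyadic split at $N\asymp 1/|I|$ for sufficiency), and the necessity half is sound: $\mathcal{Q}_p\subseteq\mathscr{B}$ for every $p>0$, and your block estimate $f'(1-1/n)\gtrsim n^2a_n$ correctly uses the monotonicity of the coefficients.

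There is, however, one genuine error in the sufficiency half. You assert $\mathcal{Q}_p=\mathscr{B}$ for $p\geq 1$ and dispose of that whole range via the elementary bound $|f'(z)|\leq M/(1-|z|)$. The identity $\mathcal{Q}_p=\mathscr{B}$ holds only for $p>1$; one has $\mathcal{Q}_1=BMOA$, a proper subspace of $\mathscr{B}$ (the paper itself records this immediately after the lemma). So as written your argument does not cover $p=1$: membership in $\mathscr{B}$ does not imply membership in $BMOA$. The repair costs nothing, because the characterization you invoke for $0<p<1$ --- $f\in\mathcal{Q}_p$ if and only if $|f'(z)|^2(1-|z|^2)^p\,dA(z)$ is a $p$-Carleson measure --- is in fact valid for all $0<p<\infty$ (at $p=1$ it is the classical Carleson-measure description of $BMOA$), and every step of your dyadic argument ($\|g_N\|_\infty\leq NM$, $\int_{S(I)}(1-|z|^2)^p\,dA\asymp|I|^{p+2}$, $\sum_{k>N}k^{-1-p}\asymp N^{-p}$) goes through verbatim for any $p>0$. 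Simply run that argument uniformly in $p$ and drop the appeal to $\mathcal{Q}_p=\mathscr{B}$.
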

\begin{theorem}
	Suppose $0<\alpha\leq 1$, $0<\beta<1$ and $0<p<\infty$. If $\mathcal{H}_{\mu,\alpha}$ is a bounded operator from $\mathscr{B}_{\beta}$ into $\mathcal{Q}_{p}$, then the measure $\mu$ is an $\alpha$-Carleson measure.
\end{theorem}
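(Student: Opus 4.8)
The plan is to test $\mathcal{H}_{\mu,\alpha}$ against the single function $f\equiv 1$. Since $0<\beta<1$, the constant $1$ belongs to $\mathscr{B}_{\beta}$ (its Bloch-type seminorm is $0$), so the boundedness hypothesis forces $\mathcal{H}_{\mu,\alpha}(1)\in\mathcal{Q}_{p}$. Taking $a_{0}=1$ and $a_{k}=0$ for $k\geq 1$ in $(\ref{eq2})$, we get
$$\mathcal{H}_{\mu,\alpha}(1)(z)=\sum_{n=0}^{\infty}\mu_{n,0,\alpha}z^{n}=\sum_{n=0}^{\infty}b_{n}z^{n},\qquad b_{n}:=\frac{\Gamma(n+\alpha)}{n!\,\Gamma(\alpha)}\int_{[0,1)}t^{n}\,d\mu(t).$$
Because the right-hand side is a genuine analytic function on $\mathbb{D}$, every $b_{n}$ is finite; in particular $b_{0}=\mu([0,1))<\infty$.

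The heart of the argument is to verify that $(b_{n})_{n\geq 0}$ is nonnegative and nonincreasing, so that Lemma \ref{le32} can be applied to $\mathcal{H}_{\mu,\alpha}(1)$ directly. Nonnegativity is immediate from $\mu\geq 0$. The moments $\int_{[0,1)}t^{n}\,d\mu(t)$ decrease in $n$ because $t^{n+1}\leq t^{n}$ on $[0,1)$. The Gamma factor is also nonincreasing in $n$: the quotient of consecutive terms equals $\frac{n+\alpha}{n+1}$, which is $\leq 1$ exactly because $0<\alpha\leq 1$ (for $\alpha>1$ this factor instead grows like $n^{\alpha-1}$ and the approach breaks down — this is why the hypothesis insists on $\alpha\leq 1$). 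A product of two nonnegative nonincreasing sequences is nonincreasing, so Lemma \ref{le32} gives $\sup_{n\geq 1}n\,b_{n}<\infty$.

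The rest is routine. By Stirling's formula $\frac{\Gamma(n+\alpha)}{n!\,\Gamma(\alpha)}\asymp n^{\alpha-1}$ for $n\geq 1$, so the previous bound reads $\sup_{n\geq 1}n^{\alpha}\int_{[0,1)}t^{n}\,d\mu(t)<\infty$. For $n\geq 2$ I would bound the moment from below by restricting to $[1-\frac1n,1)$ and using $(1-\frac1n)^{n}\geq\frac14$, which gives $\mu([1-\frac1n,1))\leq 4\int_{[0,1)}t^{n}\,d\mu(t)\leq C n^{-\alpha}$. For an arbitrary $t\in[\frac12,1)$ I choose $n=\lfloor 1/(1-t)\rfloor\geq 2$, so that $1-\frac1n\leq t$ and $n\geq\frac{1}{2(1-t)}$; then $\mu([t,1))\leq\mu([1-\frac1n,1))\leq C n^{-\alpha}\leq C2^{\alpha}(1-t)^{\alpha}$. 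For $t\in[0,\frac12)$ one simply uses $\mu([t,1))\leq\mu([0,1))<\infty$ together with $(1-t)^{\alpha}\geq 2^{-\alpha}$. Combining the two ranges shows $\mu([t,1))\leq C(1-t)^{\alpha}$ for all $t\in[0,1)$, i.e.\ $\mu$ is an $\alpha$-Carleson measure. The only point requiring care is the monotonicity of $(b_{n})$, which is exactly where the condition $0<\alpha\leq 1$ enters; the remaining estimates are standard.
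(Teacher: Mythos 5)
Your proof is correct and follows essentially the same route as the paper's: test against $f\equiv 1$, observe that the coefficients $\mu_{n,0,\alpha}$ form a nonnegative nonincreasing sequence (precisely because $0<\alpha\leq 1$), apply Lemma \ref{le32} to get $\sup_n n\,\mu_{n,0,\alpha}<\infty$, and convert this via Stirling into the $\alpha$-Carleson condition. You are in fact more careful than the paper at the two points it glosses over — the verification that the Gamma-factor ratio $\frac{n+\alpha}{n+1}\leq 1$ and the final moment-to-Carleson estimate — so no changes are needed.
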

\begin{proof}
	Assume that	$\mathcal{H}_{\mu,\alpha}$ is a bounded operator from $\mathscr{B}_{\beta}$ into $\mathcal{Q}_{p}$. We set $f(z)=1\in\mathscr{B_{\beta}}$. Then
	$$	\mathcal{H}_{\mu,\alpha}(f)(z)=\sum_{n=0}^{\infty}\big(\sum_{k=0}^{\infty}\mu_{n,k,\alpha}a_k\big)z^n=\sum_{n=0}^{\infty}\mu_{n,0,\alpha}z^n\in \mathcal{Q}_{p}.$$
	Notice that $\mu_{n,0,\alpha}=\int_{[0,1)}\frac{\Gamma(n+\alpha)}{n!\Gamma(\alpha)}t^{n}d\mu(t)$ ($0<\alpha\leq 1$) is positive and decreasing and $\frac{\Gamma(n+\alpha)}{n!\Gamma(\alpha)}\sim n^{\alpha-1}$ as $n\rightarrow \infty$ by Stirling's formula. For any $0<\lambda<1$, we take $n$ with $1-\frac{1}{n+1}\leq \lambda<1-\frac{1}{n}$. By Lemma \ref{le31}, we obtain that
	$$\infty>n\mu_{n,0,\alpha}\geq Cn^{\alpha}\int_{\mathbb{D}}t^{n}d\mu(t)\geq Cn^{\alpha}\lambda^{t}\int_{\lambda}^{1}d\mu(t)\geq \frac{C\mu([\lambda,1))}{(1-\lambda)^{\alpha}}.$$
	This shows that the measure $\mu$ is an $\alpha$-Carleson measure. The proof is completed.
\end{proof}

It is well known that the class $\mathcal{Q}_{p}$ ($p>1$) is equivalent to the Bloch space $\mathscr{B}$ and $\mathcal{Q}_{1}$ is just the space $BMOA$ (see \cite{Girela-2001} for the more information of $BMOA$). So we immediately have the following corollaries.
\begin{corollary}
	Suppose $0<\alpha\leq 1$, $0<\beta<1$. If $\mathcal{H}_{\mu,\alpha}$ is a bounded operator from $\mathscr{B}_{\beta}$ into $\mathscr{B}$, then the measure $\mu$ is an $\alpha$-Carleson measure.
\end{corollary}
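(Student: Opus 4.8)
The plan is to deduce this corollary directly from the preceding theorem together with the identification of $\mathcal{Q}_p$ spaces. First I would observe that the theorem just proved states: if $0<\alpha\le 1$, $0<\beta<1$, $0<p<\infty$, and $\mathcal{H}_{\mu,\alpha}$ is a bounded operator from $\mathscr{B}_\beta$ into $\mathcal{Q}_p$, then $\mu$ is an $\alpha$-Carleson measure. So the only thing that needs to be supplied is the well-known fact, recorded in the sentence immediately preceding the corollary (with reference \cite{Xiao-Q}), that for $p>1$ the space $\mathcal{Q}_p$ coincides with the Bloch space $\mathscr{B}$, with equivalence of norms. Hence boundedness of $\mathcal{H}_{\mu,\alpha}\colon\mathscr{B}_\beta\to\mathscr{B}$ is the same as boundedness of $\mathcal{H}_{\mu,\alpha}\colon\mathscr{B}_\beta\to\mathcal{Q}_p$ for any fixed $p>1$.

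The key steps, in order, are: (1) fix some $p>1$; (2) invoke $\mathcal{Q}_p=\mathscr{B}$ (for $p>1$) from \cite{Xiao-Q}, so that the hypothesis "$\mathcal{H}_{\mu,\alpha}$ is bounded from $\mathscr{B}_\beta$ into $\mathscr{B}$" is equivalent to "$\mathcal{H}_{\mu,\alpha}$ is bounded from $\mathscr{B}_\beta$ into $\mathcal{Q}_p$"; (3) apply the previous theorem with this choice of $p$ to conclude that $\mu$ is an $\alpha$-Carleson measure. There is essentially no computation; the argument is a one-line reduction.

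I do not anticipate a real obstacle here, since the hard analytic content — the test function argument with $f\equiv 1$, the estimate $\mu_{n,0,\alpha}\sim n^{\alpha-1}\int_{[0,1)}t^n\,d\mu(t)$ via Stirling, and the extraction of the Carleson condition by choosing $n\asymp 1/(1-\lambda)$ and applying Lemma~\ref{le32} — has already been carried out in the proof of the theorem. The only point worth a moment's care is making sure the norm equivalence $\mathcal{Q}_p\simeq\mathscr{B}$ is genuinely two-sided so that boundedness transfers in the direction we need; that is exactly what the cited result provides. One could alternatively phrase the proof without passing through $\mathcal{Q}_p$ at all, by repeating the test-function computation directly in $\mathscr{B}$, but invoking the theorem is cleaner and is clearly the intended route given the placement right after the remark about $\mathcal{Q}_p$ and $BMOA$.
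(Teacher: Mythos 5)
Your proposal matches the paper exactly: the authors derive this corollary immediately from the preceding theorem via the identification $\mathcal{Q}_p=\mathscr{B}$ for $p>1$, which is precisely your one-line reduction. No gaps.
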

\begin{corollary}
	Suppose $0<\alpha\leq 1$, $0<\beta<1$. If $\mathcal{H}_{\mu,\alpha}$ is a bounded operator from $\mathscr{B}_{\beta}$ into $BMOA$, then the measure $\mu$ is an $\alpha$-Carleson measure.
\end{corollary}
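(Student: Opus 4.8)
The statement is an immediate corollary of the preceding theorem, so the plan is simply to record the reduction. Recall the classical fact (see \cite{Xiao-Q}) that $\mathcal{Q}_{1}=BMOA$ with equivalent norms; hence the identity map is a Banach space isomorphism between these two spaces, and $\mathcal{H}_{\mu,\alpha}$ is a bounded operator from $\mathscr{B}_{\beta}$ into $BMOA$ if and only if it is a bounded operator from $\mathscr{B}_{\beta}$ into $\mathcal{Q}_{1}$. Applying the preceding theorem with $p=1$ then yields at once that $\mu$ is an $\alpha$-Carleson measure.

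If one prefers not to invoke the $\mathcal{Q}_{1}=BMOA$ identification as a black box, one can repeat the argument of the preceding theorem verbatim. First take the test function $f\equiv 1\in\mathscr{B}_{\beta}$, so that
$$\mathcal{H}_{\mu,\alpha}(f)(z)=\sum_{n=0}^{\infty}\mu_{n,0,\alpha}z^{n}\in BMOA,\qquad \mu_{n,0,\alpha}=\int_{[0,1)}\frac{\Gamma(n+\alpha)}{n!\Gamma(\alpha)}t^{n}\,d\mu(t).$$
For $0<\alpha\leq 1$ the sequence $\{\mu_{n,0,\alpha}\}$ is nonnegative and nonincreasing, so the coefficient criterion for $BMOA$ power series with monotone coefficients (equivalently Lemma \ref{le32} with $p=1$, since $BMOA=\mathcal{Q}_{1}$) gives $\sup_{n}n\,\mu_{n,0,\alpha}<\infty$. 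Finally, for $0<\lambda<1$ choose $n$ with $1-\frac{1}{n+1}\leq\lambda<1-\frac{1}{n}$ and use $\frac{\Gamma(n+\alpha)}{n!\Gamma(\alpha)}\sim n^{\alpha-1}$ (Stirling) to estimate
$$\infty>n\,\mu_{n,0,\alpha}\geq Cn^{\alpha}\int_{[\lambda,1)}t^{n}\,d\mu(t)\geq Cn^{\alpha}\lambda^{n}\mu([\lambda,1))\geq \frac{C\,\mu([\lambda,1))}{(1-\lambda)^{\alpha}},$$
which is exactly the $\alpha$-Carleson condition $\mu([\lambda,1))\leq C(1-\lambda)^{\alpha}$.

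There is essentially no obstacle in this argument: the only non-elementary ingredient is the identification $\mathcal{Q}_{1}=BMOA$ (in the first version), or equivalently the $BMOA$ coefficient theorem for power series with nonnegative nonincreasing coefficients (in the second), both of which are classical; everything else is the bookkeeping already carried out in the proof of the preceding theorem. I would present the first, one-line version and simply refer back to that theorem.
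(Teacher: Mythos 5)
Your first, one-line version is exactly the paper's argument: the corollary is obtained immediately from the preceding theorem with $p=1$ via the classical identification $\mathcal{Q}_{1}=BMOA$. The proposal is correct and takes the same approach (your self-contained second version merely unfolds the theorem's proof, incidentally correcting the paper's typo $\lambda^{t}$ to $\lambda^{n}$).
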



\begin{thebibliography}{99}	

	\bibitem{aleman_eigenfunctions_2012} Aleman, A., Montes-Rodr\'iguez, A., Sarafoleanu, A.: The {Eigenfunctions} of the {Hilbert} {Matrix}. Const. Approx. \textbf{36}, 353-374 (2012)

	\bibitem{Hankel_matrices_Dirichlet_2014} Bao,G., Wulan, H.: {Hankel} matrices acting on {Dirichlet} spaces. J. Math. Anal. Appl. \textbf{409}, 228-235 (2014)

	\bibitem{chatzifountas_generalized_2014} Chatzifountas, Ch., Girela, D., Pel\'{a}ez J. \'A.: A generalized {Hilbert} matrix acting on {Hardy} spaces. J. Math. Anal. Appl.\textbf{413}, 154-168 (2014)

	
	\bibitem{diamantopoulos_composition_2000} Diamantopoulos, E., Siskakis, A. G.: Composition operators and the {Hilbert} matrix. Studia Math. \textbf{140}, 191-198 (2000)
			
	\bibitem{duren_theory_1970} Duren, P. L.: Theory of ${H}^p$ {Spaces}. Academic Press, New York (1970)


	\bibitem{duren_bergman_2004} Duren, P. L., Schuster, A.: Bergman spaces. Mathematical surveys and monographs, American Mathematical Society, Providence, R.I (2004)
	
	
	\bibitem{galanopoulos_generalized_2014} Galanopoulos, P., Girela, D., Pel\'{a}ez, J. \'{A}., Siskakis, A. G.: Generalized {Hilbert} operators. Ann. Acad. Sci. Fenn. Math. \textbf{39}, 231-258 (2014)

	\bibitem{galanopoulos_hankel_2010} Galanopoulos P., Pel\'{a}ez J. \'{A}.: A {Hankel} matrix acting on {Hardy} and {Bergman} spaces. Studia Math. \textbf{200}, 201-220 (2010)
	
	\bibitem{Girela-2001} Girela, D.: Analytic functions of bounded mean oscillation. In: Complex Function Spaces, Mekrij\"arvi 1999 Editor: R. Aulaskari. Univ. Joensuu Dept. Math. Rep. Ser. 4, Univ. Joensuu, 51-170 (2001)
	
	\bibitem{girela_generalized_2018} Girela, D., Merch$\acute{\text{a}}$n, N.: A generalized {Hilbert} operator acting on conformally invariant spaces. Banach J. Math. Anal. \textbf{12}, 374-398 (2018)
		
	
	\bibitem{hastings_carleson_1975} Hastings, W. W.: A {Carleson} measure theorem for {Bergman} spaces. Studia Math. \textbf{52}, 237-241 (1975)

	\bibitem{Li-2021} Li, S., Zhou, J.: Essential norm og generalized {Hilbert} matrix from {Bloch} type spaces to {BMOA} and {Bloch} space. AIMS Math. \textbf{6} (4), 3305-3318 (2021)
	
	\bibitem{zhao_Vanishing_logarithmic} Maccluer, B.,  Zhao, R.: Vanishing logarithmic {Carleson} measures. Illinois J. Math. \textbf{46}, 507-518 (2002)
	
	\bibitem{pommerenke_bloch_1974} Pommerenke, Ch., Clunie, J., Anderson, J.: On {Bloch} functions and normal functions.  J. Reine. Angew. Math. \textbf{270}, 12-37 (1974)
	
	\bibitem{Xiao-Q} Xiao, J.: Holomorphic {Q} class. Berlin: Spring LNM, (2001)
				
	\bibitem{Ye-Zhou-Bergman} Ye, S., Zhou, Z: A derivative-{Hilbert} operator acting on {Bergman} spaces. J. Math. Anal. Appl. \textbf{506}, 125553 (2022)
	
	\bibitem{Ye-Zhou-Bloch} Ye, S., Zhou, Z: A derivative-{Hilbert} operator acting on the {Bloch} space. Complex Anal. Oper. Theory. \textbf{15}, 88 (2021)
		
	\bibitem{zhang-2005} Zhang, X.: Weighted ces$\grave{\text{a}}$o operator on the {Dirichlet} type spaces and {Bloch} type spaces $C^{n}$. Chin. Ann. Math. \textbf{26}, 138-150 (2005)		
				
	\bibitem{zhao_logarithmic} Zhao, R.: On logarithmic {Carleson} measures. Acta Sci. Math. \textbf{69}, 605-618 (2003)
	
	\bibitem{Zhu_1993_Bloch} Zhu, K.: {Bloch} {Type} {Spaces} of {Analytic} {Functions}. Rocky MT. J. Math. \textbf{23}, 1143-1177  (1993)	
	
\end{thebibliography}
\end{document}